\newcommand{\al}{\alpha}
\newcommand{\la}{\lambda}
\newcommand{\ty}{type}
\newtheorem{rem}{Remark}
\newtheorem{theorem}{Theorem}[section]
\newtheorem{proposition}{Proposition}[section]
\newtheorem{lemma}[theorem]{Lemma}
\newtheorem{definition}[theorem]{Definition}
\newtheorem{corollary}[theorem]{Corollary}
\newtheorem{example}[theorem]{Example}
\newtheorem{remark}[theorem]{Remark}
\tikzstyle{wv}=[circle,draw=black!90,fill=white!20,thick,inner sep=2pt,minimum width=5pt] 
\tikzstyle{bv}=[circle,draw=black!90,fill=black!100,thick,inner sep=2pt,minimum width=5pt] 
\tikzstyle{gv}=[circle,draw=black!70,fill=gray!100,thick,inner sep=2pt,minimum width=5pt]
\def\ie{{\it i.e.\ }}
\def\eg{{\it e.g.\ }}
\begin{document}

\title{Moments of normally distributed random matrices -- Bijective explicit evaluation }

\author{Ekaterina Vassilieva}
\maketitle
\begin{abstract}
This paper is devoted to the distribution of the eigenvalues of $XUYU^{t}$ where $X$ and $Y$ are given symmetric matrices and $U$ is a random real valued square matrix of standard normal distribution. More specifically we look at its moments, i.e. the mathematical expectation of the trace of $(XUYU^{t})^n$ for arbitrary integer $n$.
 Hanlon, Stanley, Stembridge (1992) showed that this quantity can be expressed in terms of some generating series for the connection coefficients of the double cosets of the hyperoctahedral group with the eigenvalues of $X$ and $Y$ as indeterminate. We  provide an explicit evaluation of these series in terms of monomial symmetric functions. Our development relies on an interpretation of the connection coefficients in terms of locally orientable hypermaps and a new bijective construction between partitioned locally orientable hypermaps and some decorated forests. As a corollary we provide a simple explicit evaluation of the moments of $XUYU^{*}$ when $U$ is complex valued and $X$ and $Y$ are given hermitian matrices.
\end{abstract}

\section{Introduction}
\subsection{Basic notations}
For any integer $n$, we note $[n]=\{1,\ldots,n\}$, $S_n$ the symmetric group on $n$ elements and $\la=(\la_1,\la_2,\ldots,\la_p) \vdash n$ an integer partition of $n$ with $\ell(\la)=p$ parts sorted in decreasing order. 
If $n_i(\la)$ is the number of parts of $\la$ that are equal to $i$ (by convention $n_0(\la)=0$), then we may write $\la $ as $[1^{n_1(\la)}\,2^{n_2(\la)}\ldots]$ and define $Aut_{\la}=\prod_i n_i(\la)!$ and $z_\lambda =\prod_i i^{n_i(\lambda)}n_i(\lambda)!$. We note $m_\lambda(x)$ and $p_\lambda(x)$ the monomial and power sum symmetric functions indexed by $\lambda$ on indeterminate $x$. For a $m\times m$ matrix $V$ we write $m_\la (V)$ and $p_\la(V)$ the value of these symmetric functions at the eigenvalues of $V$. Finally, for any real number $\al$ and $l$ non negative integers $k_1,\ldots,k_l$, we note the multinomial coefficients: 
\small
$$\binom{\al}{k_1,\ldots,k_l} = \frac{\al(\al-1)\ldots(\al-\sum_i k_i+1)}{k_1!k_2!\ldots k_l!}.$$
\normalsize
\subsection{Integer arrays}
\noindent Given $\la$ and $\mu$, two integer partitions of $n$ and a non negative integer $r$, we define the sets $M_{\la,\mu}^r$ of $4$-tuples ${\bf A}=(P,P',Q,Q')$ of bi-dimensional arrays of non negative integers such that:
$n_i(\mu) = \sum_{j \geq 0}Q_{ij} + Q'_{ij},  r =\sum_{i,j}j(Q_{ij} + Q'_{ij})$.
Additionally, there exist two indices $i_0$ and $j_0$ such that:
$
n_i(\la) =\delta_{i,i_0}+ \sum_{j \geq 0}P_{ij} + P'_{ij}, r =j_0 + \sum_{i,j}j(P_{ij} + P'_{ij})
$.
For such an array, we note: $p=|P| = \sum_{i,j\geq 0}P_{ij},  p'=\ell(\la)-p-1=|P'|, q=|Q|$, $q'=\ell(\mu)-q=|Q'|$,  ${\bf A!} = \prod_{i,j}P_{ij}!\,P'_{ij}!\,Q_{ij}!\,Q'_{ij}!$. We define as well $\mathcal{I}({\bf A})= i_0$ if $r=0$, otherwise:
\small
\begin{align*}
\mathcal{I}({\bf A}) = \binom{i_0}{j_0,j_0}\left [i_0-2j_0 + \frac{\sum_{i,j}{jQ'}(j_0(n-p)-ri_0)}{r^2}+\frac{\sum_{i,j}\left((n-q)j-ir\right)Q'\sum_{i,j}\left(i_0j-j_0(i-1)\right)P}{r^2(n-q-2r)}\right ]
\end{align*}
\normalsize

\subsection{Main results}
We look at the quantity $p_n(XUYU^{t})$ (i.e the trace of $(XUYU^{t})^n$) where $X$ and $Y$ are given $m\times m$ real symmetric matrices, $U$ is a random $m\times m$ real matrix whose entries are independent standard normal variables and $U^{t}$ is the transpose of $U$. The mathematical expectation of this quantity is of particular interest for statisticians (see \cite{OU}). We define: 
\begin{equation}
\label {real} P^{\mathbb{R}}_n(X,Y) = \mathcal{E}_{U}(p_n(XUYU^{t}))
\end{equation}
Similarly, for $X$ and $Y$ given $m\times m$ hermitian matrices and $U$, random complex matrix whose entries are independent standard normal variables, we define:
\begin{equation}
\label {complex} P^{\mathbb{C}}_n(X,Y) = \mathcal{E}_{U}(p_n(XUYU^{*}))
\end{equation}
Where $U^{*}$ is the conjugate transpose of $U$. In \cite{HSS} Hanlon, Stanley and Stembridge proved that both of these quantities can be expressed as a linear combination of the $p_\la(X)p_\mu(Y)$ for $\la, \mu \vdash n$. Furthermore, they show that the coefficients in these expansions are the {\bf connection coefficients} of two commutative subalgebras of the group algebra of the symmetric group, the {\bf class algebra} (in the case of complex matrices) and the {\bf double coset algebra} (in the case of real matrices). While these coefficients admit a very nice combinatorial interpretation, their explicit value is unknown in the general case. By interpreting these coefficients as the cardinalities of sets of {\bf locally orientable (partitioned) unicellular hypermaps} and by introducing  a new bijective construction between such hypermaps and decorated forests, we provide an explicit expansion of $P^{\mathbb{R}}_n(X,Y)$ in terms of the $m_\la(X)m_\mu(Y)$. Namely :

\begin{theorem}\label{thm:main}

Let $P^{\mathbb{R}}_n(X,Y)$ be defined as in Equation \ref{real}, we have :
\begin{multline*}  \label{eq:mainthm} 
P^{\mathbb{R}}_n(X,Y)= \sum_{\lambda,\mu \vdash n}m_{\lambda}(X)m_{\mu}(Y)Aut_{\lambda}Aut_{\mu}\times \\
\sum_{r\geq 0}\sum_{{\bf A}\in M^r_{\lambda,\mu}}\frac{\mathcal{I}({\bf A})}{{\bf A}!}\frac{r!^2(n-q-2r)!(n-1-p-2r)!}{2^{2r-p'-q'}(n-p-q-2r)!} 
\prod_{i,j}{\binom{i-1}{j,j}}^{(P+Q)_{i,j}}{\binom{i-1}{j,j-1}}^{(P'+Q')_{i,j}} 
\end{multline*}
\end{theorem}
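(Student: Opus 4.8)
The plan is to start from the algebraic identity of Hanlon, Stanley and Stembridge \cite{HSS} and to reduce the computation of $P^{\mathbb{R}}_n(X,Y)$ to a purely enumerative problem about locally orientable hypermaps. Their result expresses $P^{\mathbb{R}}_n(X,Y)$ as a linear combination of the products $p_\lambda(X)p_\mu(Y)$, in which the coefficients are, up to an explicit scalar, the connection coefficients of the double coset algebra of the hyperoctahedral group. First I would record this expansion and recall the combinatorial dictionary that identifies each such coefficient with the number of locally orientable unicellular hypermaps whose vertex distribution is $\lambda$ and whose hyperedge distribution is $\mu$. This step is a matter of collecting known statements and fixing normalizations; the genuinely new work begins once the problem is phrased in the language of hypermaps.

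The second step is the change of basis from power sums to monomial symmetric functions. Since each $p_\lambda$ expands on the monomial basis through the refinement order on partitions, passing from the $p_\lambda(X)p_\mu(Y)$ expansion to an $m_\lambda(X)m_\mu(Y)$ expansion amounts to a summation over set partitions of the underlying vertices and hyperedges. I would encode this by introducing the \emph{partitioned} locally orientable hypermaps: objects in which the vertices, respectively the hyperedges, carry an additional set partition refining their cyclic structure. A Möbius inversion on the partition lattice then shows that the coefficient of $m_\lambda(X)m_\mu(Y)$ is, up to the factor $Aut_\lambda Aut_\mu$, the cardinality of a set of partitioned hypermaps, the factors $Aut_\lambda$ and $Aut_\mu$ accounting for the relabellings of equal parts that the monomial basis does not distinguish.

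The heart of the argument is the third step: applying the new bijective construction between partitioned locally orientable hypermaps and decorated forests. Under this bijection a partitioned hypermap is replaced by a forest on its partition blocks, each block and each edge carrying decorations that record both the local cyclic data and, crucially, whether a given connection is \emph{orientable} or \emph{twisted}. The four arrays $(P,P',Q,Q')$ of $M^r_{\lambda,\mu}$ are precisely the statistics counting, for each local type $(i,j)$, how many blocks fall into each of the four classes: the primed arrays $P',Q'$ collect the twisted (non-orientable) contributions and the unprimed arrays the orientable ones. Because a forest factorizes as a product over its blocks, the enumeration splits into the local product $\prod_{i,j}\binom{i-1}{j,j}^{(P+Q)_{ij}}\binom{i-1}{j,j-1}^{(P'+Q')_{ij}}$, where each block of type $(i,j)$ contributes one binomial according to its parity, while the global factorials $r!^2(n-q-2r)!(n-1-p-2r)!/(n-p-q-2r)!$ count the ways of assembling the forest components and choosing roots, and the power $2^{2r-p'-q'}$ records the remaining orientation and twisting choices. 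The multinomial weight $1/{\bf A}!$ corrects for the orderings of blocks of the same type.

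The most delicate point, and the one I expect to be the main obstacle, is the appearance of the root factor $\mathcal{I}({\bf A})$. Because the hypermaps are unicellular and rooted, a single distinguished vertex, recorded by the pair $(i_0,j_0)$, must be treated separately from the rest of the forest, and its contribution does not factor as cleanly as the others. I would therefore isolate the fully orientable case first, where $p'=q'=0$ and $\mathcal{I}({\bf A})$ degenerates to $i_0$, and check that the resulting product agrees with the known orientable enumerations; only then would I reintroduce the twists and track how the root interacts with the non-orientable structure, which is what produces the two correction terms inside the bracket defining $\mathcal{I}({\bf A})$. Establishing that the bijection is well defined and simultaneously preserves all of $(P,P',Q,Q')$, the indices $(i_0,j_0)$ and the orientability data is the crux; once it is in place, assembling the stated closed form is a bookkeeping of the factorial and power-of-two factors described above.
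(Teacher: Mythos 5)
Your overall architecture coincides with the paper's: invoke Hanlon--Stanley--Stembridge to write $P^{\mathbb{R}}_n(X,Y)=\frac{1}{|B_n|}\sum b^n_{\lambda,\mu}p_\lambda(X)p_\mu(Y)$, identify $b^n_{\lambda,\mu}/|B_n|$ with the hypermap count $L^n_{\lambda,\mu}$, pass to monomial symmetric functions by introducing partitioned hypermaps (the paper does this via $p_\lambda=\sum_\mu Aut_\mu\overline{R}_{\lambda\mu}m_\mu$ rather than M\"obius inversion, but the effect is the same relation $\sum L^n_{\lambda,\mu,r}p_\lambda p_\mu=\sum Aut_\lambda Aut_\mu LP^n_{\lambda,\mu,r}m_\lambda m_\mu$), refine by the arrays ${\bf A}$, and reduce everything to a bijection $\mathcal{LP}({\bf A})\to\mathcal{F}({\bf A})$ together with an enumeration of the forests $\mathcal{F}({\bf A})$. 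Up to that point your plan is the paper's plan.

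The genuine gap is in the final enumeration. You assert that because ``a forest factorizes as a product over its blocks,'' the count splits into the local product $\prod_{i,j}\binom{i-1}{j,j}^{(P+Q)_{ij}}\binom{i-1}{j,j-1}^{(P'+Q')_{ij}}$ times global factorials for ``assembling the components,'' with $2^{2r-p'-q'}$ recording orientation choices. No such factorization exists: the forests carry a global constraint (the ascendant/descendant structure built from tree-edges \emph{and} the arrows attached to maximal loops must form a single tree rooted at the seed), plus a global matching between loops and oppositely colored vertices and between white and black thorns, and these couple the blocks to one another. The tell-tale sign is the factor $\mathcal{I}({\bf A})$, a rational expression in $i_0,j_0,r,n,p,q$ and weighted sums over \emph{all four} arrays simultaneously; a quantity of that shape cannot arise from a product over independent local choices times assembly multinomials. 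The paper obtains $F({\bf A})$ by writing a system of functional equations for the generating functions of subforests hanging from each vertex type ($H,W,W',B,B',A_w,A_b$), applying the multivariate Lagrange inversion formula, and then collapsing a nine-parameter sum through a determinant expansion and repeated Vandermonde convolutions; this computation is the analytic core of the proof and is where $\mathcal{I}({\bf A})$, the factorial ratio, and the power of $2$ actually come from (the $2$'s arise from $(2j-1)!!=2^{-j}(2j)!/j!$ in pairing loop extremities, not from twisting choices). Your proposal also leaves the bijection itself unestablished: the paper proves injectivity and surjectivity by an explicit label-recovery algorithm that reconstructs $1,\widehat{1},2,\widehat{2},\dots$ step by step from the forest, and without some such argument the claim $LP({\bf A})=F({\bf A})$ is unsupported. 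Isolating the $r=0$ case first is a reasonable sanity check (it recovers the orientable result), but it does not by itself produce the general formula.
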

 
Using a special case of our bijection we show the following result:
\begin{theorem}\label{thm:comp}
Let $P^{\mathbb{C}}_n(X,Y)$ be defined as in Equation \ref{complex}, we have :
\begin{equation*}
 \label{eq:complex} 
P^{\mathbb{C}}_n(X,Y)= n\sum_{\lambda,\mu \vdash n}m_{\lambda}(X)m_{\mu}(Y)\frac{(n-\ell(\la))!(n-\ell(\mu))!}{(n+1-\ell(\la)-\ell(\mu))!} 
\end{equation*}
\end{theorem}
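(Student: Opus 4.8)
\textit{Proof proposal.} The plan is to obtain Theorem \ref{thm:comp} as the orientable specialization of the machinery built for Theorem \ref{thm:main}. First I would invoke the result of Hanlon, Stanley and Stembridge to write $P^{\mathbb{C}}_n(X,Y)=\sum_{\la,\mu\vdash n}c_{\la,\mu}\,p_\la(X)p_\mu(Y)$, where $c_{\la,\mu}$ is the connection coefficient of the class algebra of $S_n$ counting the factorizations of a fixed $n$-cycle into a permutation of type $\la$ times a permutation of type $\mu$; combinatorially these are the rooted unicellular \emph{orientable} hypermaps with vertex distribution $\la$ and hyperedge distribution $\mu$. The orientable hypermaps form exactly the subfamily of the locally orientable hypermaps of Theorem \ref{thm:main} on which every gluing is untwisted.

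Next I would pass from the power-sum to the monomial basis through partitioned hypermaps, exactly as in the real case: the coefficient of $m_\la(X)m_\mu(Y)$ is read off by equipping each orientable hypermap with a set partition of its vertices into $\ell(\la)$ blocks and of its hyperedges into $\ell(\mu)$ blocks, each block being a union of cycles, and by recording the block sizes as $\la$ and $\mu$. This is the device that converts the intractable $p_\la p_\mu$ coefficients into monomial ones, and it is where the factors $Aut_\la$ and $Aut_\mu$ enter the bookkeeping, in the same shape as in Theorem \ref{thm:main}.

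The heart of the argument is to restrict the bijection of Theorem \ref{thm:main} to this orientable subfamily. I expect orientability to force the non-orientable statistics of the associated decorated forest to be trivial: concretely, the parameter $r$ collapses to $0$ and the primed arrays vanish, $P'=Q'=0$ (equivalently $p=\ell(\la)-1$, $p'=0$, $q=\ell(\mu)$, $q'=0$). Feeding $r=0$ and $P'=Q'=0$ into the general term of Theorem \ref{thm:main} should make the binomial product collapse to $1$, remove the power of $2$, and reduce the weight $\mathcal{I}(\mathbf{A})$ to its flat value $i_0$, leaving only the factorial factor $\frac{(n-\ell(\mu))!\,(n-\ell(\la))!}{(n+1-\ell(\la)-\ell(\mu))!}$. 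This explains why the clean formula carries no sum over $r$: for fixed $\la,\mu$ the genus $g=(n+1-\ell(\la)-\ell(\mu))/2$ is already determined, and the monomial coefficient aggregates the contributions of all refinements of $\la$ and $\mu$.

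Finally I would assemble the constant. With $P'=Q'=0$ the only remaining freedom in $\mathbf{A}$ is the choice of the marked index $i_0$, with $Q_{i0}=n_i(\mu)$ and $P_{i0}=n_i(\la)-\delta_{i,i_0}$, so that $Aut_\la Aut_\mu/\mathbf{A}!=n_{i_0}(\la)$; summing the residual weight $\mathcal{I}(\mathbf{A})=i_0$ against this multiplicity yields $\sum_{i_0}i_0\,n_{i_0}(\la)=|\la|=n$, which is precisely the prefactor of Theorem \ref{thm:comp}. The main obstacle will be the third step: proving rigorously that the orientable hypermaps are \emph{exactly} the preimages of the untwisted, $r=0$ forests under the bijection—that is, that the primed arrays and the parameter $r$ are genuine measures of non-orientability—rather than merely verifying that the arithmetic of the two formulas agrees once this identification is granted.
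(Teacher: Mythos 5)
Your proposal is correct and follows essentially the same route as the paper: apply the Hanlon--Stanley--Stembridge expansion, identify $c^n_{\la,\mu}$ with the $r=0$ locally orientable hypermaps, pass to partitioned hypermaps, and specialize the forest formula at $r=0$ (forcing $P'=Q'=0$, $j_0=0$, $\mathcal{I}(\mathbf{A})=i_0$), with the final count $\sum_{i_0} i_0\,n_{i_0}(\la)=n$ exactly as you describe. The one step you flag as the main obstacle --- that orientability is precisely $r=0$ --- is not reproved in the paper but is exactly Equation \ref{eq: or}, cited from Goulden and Jackson \cite[Prop 4.1]{GJ3}, so it is available off the shelf.
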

Denote $I_l$ the $m\times m$ diagonal matrix whose first $l$ diagonal entries are equal to $1$ and the $m-l$ remaining ones equal $0$. In both theorems the special cases $X=I_l $ and $Y=I_m$ are of particular interest. Let  $Q^{\mathbb{R}}_n(l,m) = P^{\mathbb{R}}_n(I_l,I_m)$ (resp.  $Q^{\mathbb{C}}_n(l,m) = P^{\mathbb{C}}_n(I_l,I_m)$). As a corollary to our main results, we find:

\begin{corollary}\label{thm:cor}

Let $Q^{\mathbb{R}}_n(l,m)$ be defined as above, we have :
\begin{equation*}  \label{eq:corthm} 
\frac{1}{n!}Q^{\mathbb{R}}_n(l,m)=\sum_{r,p,q,p',q'}\binom{l}{p,p'}\binom{m}{q,q'}\binom{n+2r-1}{p+2r-1,q+2r-1}{\binom{n+2r-1}{r,r}}^{-1}2^{2r-p'-q'}\alpha_{r,p,q,p',q'},
\end{equation*}
where the summation indices check $p\geq1$, $q+q'\geq1$ and we have $\alpha_{0,p,q,p',q'} = \delta_{p'0}\delta_{q'0}$ and for $r>0$:
\begin{equation*}
\alpha_{r,p,q,p',q'} = \sum_{a,b}(-1)^{p'+q'-a-b}\left [\frac{p}{p+a}\left(1+\frac{aq}{(p+2r)(q+b)}\right)\right]\binom{-(p+a)/2}{r}\binom{-(q+b)/2}{r}\binom{p'}{a}\binom{q'}{b}
 \end{equation*}
\end{corollary}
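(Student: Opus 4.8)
The plan is to derive Corollary~\ref{thm:cor} as a specialization of Theorem~\ref{thm:main} to the matrices $X=I_l$ and $Y=I_m$. First I would compute the symmetric functions at these diagonal matrices: since $I_l$ has eigenvalues consisting of $l$ ones and $m-l$ zeros, the monomial symmetric function $m_\lambda(I_l)$ is nonzero only when $\lambda=[1^{\ell(\lambda)}]$, in which case $m_{[1^k]}(I_l)=\binom{l}{k}$, and likewise $m_{[1^k]}(I_m)=\binom{m}{k}$. This collapses the double sum over $\lambda,\mu\vdash n$ in Theorem~\ref{thm:main} to the single pair $\lambda=\mu=[1^n]$, so that $\ell(\lambda)=\ell(\mu)=n$ and $Aut_\lambda=Aut_\mu=n!$. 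With this specialization every part has size $1$, meaning $n_i(\lambda)=n_i(\mu)=0$ for $i\neq 1$, which forces the arrays $P,P',Q,Q'$ in $M^r_{\lambda,\mu}$ to be supported only on the row $i=1$ (and for the $\la$-constraint, $i_0=1$).

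Next I would rewrite the generic array data $(P,P',Q,Q')$ in terms of the scalar summation indices $r,p,q,p',q'$ that appear in the corollary. Because only the $i=1$ row survives, the index $j$ is forced by the exponent constraints: the binomials $\binom{i-1}{j,j}=\binom{0}{j,j}$ and $\binom{i-1}{j,j-1}=\binom{0}{j,j-1}$ vanish unless $j=0$ (for the unprimed arrays) and $j\in\{0,1\}$ (for the primed arrays, since $\binom{0}{0,-1}$ and $\binom{0}{1,0}$ must be read via the multinomial convention). I would track carefully which entries $P_{1j},P'_{1j},Q_{1j},Q'_{1j}$ are permitted to be nonzero, sum over them, and identify $p=|P|$, $p'=|P'|$, $q=|Q|$, $q'=|Q'|$ as in the definition in Section~1.2. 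The constraint $r=j_0+\sum_{i,j}j(P_{ij}+P'_{ij})$ together with $r=\sum_{i,j}j(Q_{ij}+Q'_{ij})$ then pins down how the weight $r$ is distributed, and the factor $\mathbf{A}!$ in the denominator becomes a product of factorials that, after multiplication by $Aut_\lambda Aut_\mu=n!^2$ and the combinatorial prefactor in Theorem~\ref{thm:main}, must reorganize into the multinomial coefficients $\binom{l}{p,p'}$, $\binom{m}{q,q'}$, $\binom{n+2r-1}{p+2r-1,q+2r-1}$ and the inverse binomial $\binom{n+2r-1}{r,r}^{-1}$ displayed in the corollary.

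The genuinely delicate step is the reduction of the arithmetic factor $\mathcal{I}(\mathbf{A})$ to the coefficient $\alpha_{r,p,q,p',q'}$. In the specialization $i_0=1$, so the leading $\binom{i_0}{j_0,j_0}=\binom{1}{j_0,j_0}$ term and the bracketed expression in the definition of $\mathcal{I}(\mathbf{A})$ simplify, but the sums $\sum_{i,j}jQ'$, $\sum_{i,j}((n-q)j-ir)Q'$ and $\sum_{i,j}(i_0 j - j_0(i-1))P$ must be evaluated on the restricted support and re-expressed through $r,p,q,p',q'$. I expect the appearance of the signed double sum over $a,b$ with the binomials $\binom{p'}{a}\binom{q'}{b}$ and the half-integer binomials $\binom{-(p+a)/2}{r}$, $\binom{-(q+b)/2}{r}$ to emerge from resumming the contributions of the primed arrays $P'$ and $Q'$ at $j=1$ against the $j=0$ contributions, where $a$ and $b$ count how the primed weight splits. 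The factor $(-1)^{p'+q'-a-b}$ and the rational prefactor $\frac{p}{p+a}\bigl(1+\frac{aq}{(p+2r)(q+b)}\bigr)$ should fall out of combining $\mathcal{I}(\mathbf{A})$ with the ratio of factorials once the half-integer binomials are interpreted via the multinomial definition from Section~1.1.

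The main obstacle will be handling the case $r=0$ separately and matching it to the stated value $\alpha_{0,p,q,p',q'}=\delta_{p'0}\delta_{q'0}$: when $r=0$ the definition gives $\mathcal{I}(\mathbf{A})=i_0=1$ and the constraints force $j_0=0$ and all primed and unprimed $j\geq 1$ entries to vanish, so I would verify directly that only $p'=q'=0$ survives and that the surviving multinomial factors reproduce the $r=0$ term of the corollary. The algebraic repackaging for $r>0$ — showing that the intricate rational expression in $\mathcal{I}(\mathbf{A})$, after clearing the $r^2$ and $(n-q-2r)$ denominators against the factorial prefactor, collapses exactly into $\alpha_{r,p,q,p',q'}$ — is where the bookkeeping is heaviest, and I would organize it by first isolating the $a=b=0$ leading term to fix the overall normalization and then treating the $a,b>0$ corrections as perturbations coming from the primed arrays.
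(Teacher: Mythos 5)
Your opening specialization is incorrect, and the rest of the argument collapses with it. For $\lambda\vdash n$ the evaluation of the monomial symmetric function at the eigenvalues of $I_l$ (namely $l$ ones and $m-l$ zeros) is $m_\lambda(I_l)=(l)_{\ell(\lambda)}/Aut_\lambda$, as recorded in the paper just before Equation (\ref{eq:nodeg}); this is nonzero for \emph{every} $\lambda$ with $\ell(\lambda)\le l$, not only for $\lambda=[1^n]$ (for instance $m_{(2)}(1,1,0)=2$). Hence the double sum in Theorem \ref{thm:main} does not collapse to the single pair $\lambda=\mu=[1^n]$: one must keep all partitions, grouped by their lengths, and resum the degree-refined formula over all degree distributions with $\ell(\lambda)=p+p'+1$ and $\ell(\mu)=q+q'$. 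Your subsequent analysis (arrays supported on the row $i=1$ with $i_0=1$, the forced values of $j$, the reduction of $\mathcal{I}(\mathbf{A})$ on that restricted support) is therefore carried out on the wrong index set. Note also that on the row $i=1$ the primed binomial $\binom{i-1}{j,j-1}$ equals $\binom{0}{1,0}=0$ for $j=1$ and is undefined or zero otherwise, so your restriction would force $p'=q'=0$ and could never produce the nonzero $\alpha_{r,p,q,p',q'}$ with $p',q'>0$ that appear in the statement.

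The paper does not obtain the corollary by specializing the closed formula of Theorem \ref{thm:main}. It combines Corollary \ref{cor:HSS} and Equation (\ref{eq:lb}) with Equation (\ref{eq:nodeg}), which trades $l^pm^q$ for the falling factorials $(l)_{p+p'}(m)_{q+q'}=p!\,p'!\,q!\,q'!\binom{l}{p,p'}\binom{m}{q,q'}$, and then invokes the bijection $LP(\mathbf{A})=F(\mathbf{A})$ of Theorem \ref{thm:ref} together with the \emph{second} statement of Lemma \ref{lem: lag}, Equation (\ref{eq:nodegrees}), which counts forests only by the numbers $p,p',q,q',r$ of vertices of each kind. That count is proved by a separate Lagrange-inversion computation with much simpler functional equations (the square-root series in the last subsection of the Annex), and it is there that the coefficients $\alpha_{r,p,q,p',q'}$ with their half-integer binomials $\binom{-(p+a)/2}{r}$ arise. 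To salvage your route you would have to show that summing the formula of Theorem \ref{thm:main} over all degree distributions with fixed $\ell(\lambda)$ and $\ell(\mu)$ reproduces Equation (\ref{eq:nodegrees}); that is a genuinely nontrivial identity which your sketch neither states nor addresses.
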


\noindent We also have:
\begin{corollary}\label{thm:corcomp}

Let $Q^{\mathbb{C}}_n(l,m)$ be defined as above, then:
\begin{equation*}  \label{eq:corthm} 
\frac{1}{n!}Q^{\mathbb{C}}_n(l,m)=\sum_{p,q\geq 1}\binom{l}{p}\binom{m}{q}\binom{n-1}{p-1,q-1}
\end{equation*}
\end{corollary}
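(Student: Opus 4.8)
The plan is to specialize Theorem~\ref{thm:comp} to $X=I_l$ and $Y=I_m$ by first evaluating the monomial symmetric functions at the relevant eigenvalues, and then collapsing the double sum over partitions into a double sum over their numbers of parts.

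First I would compute $m_\la(I_l)$. The eigenvalues of $I_l$ are $l$ ones and $m-l$ zeros, so $m_\la(I_l)$ equals the number of distinct monomials appearing in $m_\la$ all of whose variables lie among the $l$ nonzero eigenvalues. Since $m_\la$ is the sum of the distinct rearrangements of $(\la_1,\dots,\la_{\ell(\la)},0,\dots,0)$, this count is the number of ways to place the multiset of parts of $\la$ into $l$ positions, namely $m_\la(I_l)=\frac{l!}{Aut_\la\,(l-\ell(\la))!}$, which is nonzero exactly when $\ell(\la)\le l$. The identical formula with $l$ replaced by $m$ holds for $m_\mu(I_m)$ (here $Y=I_m$ has all eigenvalues equal to $1$).

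Substituting these into Theorem~\ref{thm:comp}, the summand depends on $\la$ and $\mu$ only through $\ell(\la)$, $\ell(\mu)$, $Aut_\la$ and $Aut_\mu$. Next I would group the partitions by their number of parts, setting $p=\ell(\la)$ and $q=\ell(\mu)$, and invoke the elementary identity
\[
\sum_{\substack{\la\vdash n\\ \ell(\la)=p}}\frac{1}{Aut_\la}=\frac{1}{p!}\binom{n-1}{p-1},
\]
which holds because each partition of $n$ into $p$ parts accounts for exactly $p!/Aut_\la$ compositions of $n$ into $p$ positive parts, and there are $\binom{n-1}{p-1}$ such compositions in total. The constraints $p,q\ge1$ are automatic, since every partition of $n\ge1$ has at least one part.

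Combining these steps rewrites the double sum over partitions as a double sum over $p,q\ge1$,
\[
\tfrac{1}{n!}Q^{\mathbb C}_n(l,m)=\frac{n}{n!}\sum_{p,q}\frac{l!}{(l-p)!\,p!}\,\frac{m!}{(m-q)!\,q!}\,\frac{(n-p)!\,(n-q)!}{(n+1-p-q)!}\binom{n-1}{p-1}\binom{n-1}{q-1},
\]
after which the remaining work is a routine manipulation of factorials: $\frac{l!}{(l-p)!\,p!}=\binom{l}{p}$, $\frac{m!}{(m-q)!\,q!}=\binom{m}{q}$, while $(n-p)!\binom{n-1}{p-1}=\frac{(n-1)!}{(p-1)!}$ and likewise in $q$, so that cancelling the prefactor $n/n!=1/(n-1)!$ leaves $\binom{l}{p}\binom{m}{q}\frac{(n-1)!}{(p-1)!\,(q-1)!\,(n+1-p-q)!}=\binom{l}{p}\binom{m}{q}\binom{n-1}{p-1,q-1}$, as required. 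The only point demanding genuine care is the combinatorial evaluation of $m_\la(I_l)$ together with the bookkeeping of the automorphism factors $Aut_\la$; once the composition identity is in hand, everything else is factorial arithmetic.
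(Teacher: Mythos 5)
Your derivation is correct: the evaluation $m_\la(I_l)=(l)_{\ell(\la)}/Aut_\la$ (which the paper itself records just before Equation~\ref{eq:nodeg}), the compositions identity $\sum_{\ell(\la)=p}1/Aut_\la=\tfrac{1}{p!}\binom{n-1}{p-1}$, and the closing factorial arithmetic all check out, and the terms with $p+q>n+1$ vanish on both sides as they should. Your route is, however, not the one the paper takes. The paper never passes through Theorem~\ref{thm:comp} for this corollary: it starts from $Q^{\mathbb{C}}_n(l,m)=\sum_{p,q}c^n_{p,q}l^pm^q$ (Corollary~\ref{cor:HSS}), identifies $c^n_{p,q}$ with the hypermap count $L^n_{p,q,0}$ via Equation~\ref{eq: or}, converts to partitioned hypermaps by the falling-factorial change of basis of Equation~\ref{eq:nodeg}, and then invokes the bijection of Theorem~\ref{thm:ref} together with the aggregated forest count $F_{p,0,q,0,0}=\frac{n!}{p!\,q!}\binom{n-1}{p-1,q-1}$ from Equation~\ref{eq:nodegrees} of Lemma~\ref{lem: lag}. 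In other words, the paper counts forests directly by their numbers of white and black vertices and never sums over partition shapes, whereas you specialize the partition-indexed monomial expansion and collapse it with the compositions identity. Your version buys independence from the Lagrange-inversion computation behind Equation~\ref{eq:nodegrees}, needing only the already-stated Theorem~\ref{thm:comp} plus elementary identities; the paper's version is the one that extends uniformly to the real case (Corollary~\ref{thm:cor}), where the coefficients depend on the full array ${\bf A}$ rather than just $\ell(\la)$ and $\ell(\mu)$, so no such clean partition-level collapse is available.
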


\subsection{Connection coefficients}
For $\la \vdash n$, let $\mathcal{C}_{\la}$ be the {\bf conjugacy class} in $S_n$ of permutations with cycle type $\la$. The cardinality of the conjugacy classes is given by $|C_\la| = n!/z_\la$. We denote by $\mathcal{C}_{\la\la}$ the set of permutations of $S_{2n}$ with cycle type $\la\la =(\la_1,\la_1,\la_2,\la_2,\ldots,\la_k,\la_k)$. We look at {\bf perfect pairings} of the set $[n]\cup [\widehat{n}]=\{1,\ldots n, \widehat{1},\ldots, \widehat{n}\}$ which we view as fixed point free involutions in $S_{2n}$. Note that for $f,g \in S_{2n}$, the disjoint cycles of the product $f\circ g$ have repeated lengths \ie $f\circ g \in \mathcal{C}_{\la\la}$ for some $\la \vdash n$. Additionally, $B_n$ is the {\bf hyperoctahedral group} (i.e the centralizer of $f_{\star}=(1\widehat{1})(2\widehat{2})\cdots(n\widehat{n})$). We note $K_\lambda$ the {\bf double coset} of $B_n$ in $S_{2n}$ consisting in all the permutations $\omega$ of $S_{2n}$ such that $f_\star \circ\omega\circ f_\star\circ\omega^{-1}$ belongs to $\mathcal{C}_{\la\la}$. We have $|B_n| = 2^nn!$ and $|K_\la| = |B_n|^2/(2^{\ell(\la)}z_\la)$.
By abuse of notation, let $C_\lambda$ (resp. $K_\lambda$) also represent the formal sum of its elements in the group algebra $\mathbb{C} S_{n}$ (resp. $\mathbb{C} S_{2n}$). Then, $\{C_\lambda, \lambda \vdash n\}$ (resp. $\{K_\lambda, \lambda \vdash n\}$) forms a basis of the class algebra (resp.  double coset algebra, i.e. the commutative subalgebra of $\mathbb{C} S_{2n}$ identified as the Hecke algebra of the Gelfand pair $(S_{2n},B_n)$).
For $\lambda$, $\mu$, $\nu \vdash n$, the {\bf connection coefficients} $c^\nu_{\lambda,\mu}$ and $b^\nu_{\lambda,\mu}$ can be defined formally by:
\begin{equation}
c^\nu_{\lambda,\mu} = [C_\nu]C_\lambda C_\mu, \;\;\;\;\; b^\nu_{\lambda,\mu} = [K_\nu]K_\lambda K_\mu
\end{equation}
From a combinatorial point of view $c^\nu_{\lambda\mu}$ is the number of ways to write a given permutation $\gamma_\nu$ of $C_\nu$ as the ordered product of two permutations $\alpha\circ\beta$ where $\alpha$ is in $C_\lambda$ and $\beta$ is in $C_\mu$. Similarly, $b^\nu_{\lambda\mu}$ counts the number of ordered factorizations of a given element in $K_\nu$ into two permutations of $K_\lambda$ and $K_\mu$.
We have the following results for $\nu = (n)$:
\begin{theorem}[\cite{HSS}]\label{thm:HSS}
\begin{equation} 
\label{eq:HSS} 
P^{\mathbb{R}}_n(X,Y)= \frac{1}{|B_n|}\sum_{\lambda,\mu \vdash n}b_{\lambda, \mu}^{n}p_{\lambda}(X)p_{\mu}(Y)
\end{equation}
\begin{equation}
\label{eq:HSS2}
P^{\mathbb{C}}_n(X,Y)= \sum_{\lambda,\mu \vdash n}c^n_{\lambda,\mu}p_{\lambda}(X)p_{\mu}(Y)
\end{equation}
\end{theorem}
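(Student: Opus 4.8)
\; The natural route is the method of moments combined with Wick's (Isserlis') theorem for Gaussian matrices: expand the trace in coordinates, take the expectation monomial by monomial, and recognise the surviving combinatorial structure as a count of factorisations in $S_{2n}$ (real case) or $S_n$ (complex case). Writing $A=XUYU^{t}$ and using cyclic indices one has $\mathrm{tr}(A^n)=\sum X_{i_1a_1}U_{a_1b_1}Y_{b_1c_1}U_{i_2c_1}\cdots X_{i_na_n}U_{a_nb_n}Y_{b_nc_n}U_{i_1c_n}$, so every monomial carries exactly $2n$ entries of $U$ alongside $n$ entries of $X$ and $n$ of $Y$. Since the entries of $U$ are centred i.i.d. Gaussians, Wick's theorem rewrites $\mathcal{E}_U\big[\prod U\big]$ as a sum over perfect matchings of the $2n$ factors, each matching contributing a product of covariances $\mathcal{E}[U_{ab}U_{cd}]=\delta_{ac}\delta_{bd}$. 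These Kronecker deltas glue the row and column indices of paired factors, and I would track the identifications along two independent threads: a \emph{row thread} passing through the matrices $X$ (via the shared index $a_k$ and the trace indices $i_k$) and a \emph{column thread} passing through the matrices $Y$ (via the shared indices $b_k,c_k$).

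In the real case the $2n$ copies of $U$ split into $n$ direct factors and $n$ transpose factors, labelled by $[n]$ and $[\widehat n]$, so a Wick matching is a fixed-point-free involution $g\in S_{2n}$. Relative to the reference pairing $f_\star=(1\widehat1)\cdots(n\widehat n)$ dictated by the layout of the trace, the product of two fixed-point-free involutions carries a doubled cycle type, which is exactly the coset-type classification underlying the double cosets $K_\la$. Running the row thread through $X$ and the column thread through $Y$ produces two families of closed cycles, the first assembling into $p_\la(X)$ and the second into $p_\mu(Y)$, while the single cyclic face of $\mathrm{tr}(\cdot^n)$ fixes the target type to $\nu=(n)$. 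Grouping the admissible matchings by the resulting pair $(\la,\mu)$ then reproduces the factorisation count defining the double coset connection coefficient $b^n_{\la,\mu}$, with the prefactor $1/|B_n|$ absorbing the overcounting by the stabiliser $B_n$ of $f_\star$; this gives \eqref{eq:HSS}.

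For the complex case $U^{*}$ replaces $U^{t}$ and the only nonvanishing covariance is $\mathcal{E}[U_{ab}\ov{U_{cd}}]=\delta_{ac}\delta_{bd}$, so Wick pairs each holomorphic factor with a conjugated one. A matching is now a bijection between the $n$ $U$-slots and the $n$ $U^{*}$-slots, i.e.\ a permutation $\si\in S_n$, and there is no residual hyperoctahedral symmetry to quotient out. The row and column threads collapse directly to $p_\la(X)p_\mu(Y)$, where $\la$ is the cycle type of $\si$ composed with the cyclic trace permutation $(1\,2\,\cdots\,n)$ and $\mu$ the cycle type of $\si$ itself (or vice versa); the number of $\si$ realising $(\la,\mu)$ as a factorisation of the fixed $n$-cycle is by definition $c^n_{\la,\mu}$, which yields \eqref{eq:HSS2} with no normalising factor.

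The conceptual steps are clean, but the real obstacle is the bookkeeping in the locally orientable (real) case: one must check that the interleaved involutions genuinely produce a doubled type, that the number of free index summations equals $\ell(\la)+\ell(\mu)$ so the powers of the matrix size $m$ reassemble into honest symmetric functions $p_\la(X)p_\mu(Y)$, and that grouping matchings into double cosets reproduces $b^n_{\la,\mu}$ with precisely the normalisation $1/|B_n|$ and not some spurious power of two coming from $|K_\la|=|B_n|^2/(2^{\ell(\la)}z_\la)$. I would control this through the Gelfand pair $(S_{2n},B_n)$, expressing both the moment expansion and the connection coefficients as $B_n$-invariant sums over $S_{2n}$ and matching them by an orbit-counting argument, using the transparent complex computation as a template and dimension check.
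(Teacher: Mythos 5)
First, note that the paper does not prove this theorem: it is imported verbatim from \cite{HSS}, and the text offers no internal argument to compare against. Your sketch follows the standard route of that source --- coordinate expansion of the trace, Wick's theorem for the Gaussian entries, and classification of the surviving pairings --- and for the complex case \eqref{eq:HSS2} it is essentially complete: since $\mathcal{E}[U_{ab}U_{cd}]=0$ for circularly symmetric complex Gaussians, the matchings are permutations $\sigma\in S_n$, the Kronecker deltas reassemble the $X$- and $Y$-threads into $p_\lambda(X)p_\mu(Y)$ with $\lambda,\mu$ the cycle types of $\sigma$ composed with the trace cycle and of $\sigma$ itself, and counting such $\sigma$ gives $c^n_{\lambda,\mu}$ because $\mathcal{C}_\mu$ is closed under inversion.

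The real case, however, contains a genuine gap which you have flagged but not closed. The Wick computation delivers $P^{\mathbb{R}}_n(X,Y)=\sum_{\lambda,\mu}L^n_{\lambda,\mu}\,p_\lambda(X)p_\mu(Y)$, where $L^n_{\lambda,\mu}$ counts perfect pairings $f_3$ of the $2n$ slots with $f_3\circ f_1\in\mathcal{C}_{\lambda\lambda}$ and $f_3\circ f_2\in\mathcal{C}_{\mu\mu}$ for the two reference pairings determined by the layout of the trace. Passing from this to $\frac{1}{|B_n|}b^n_{\lambda,\mu}$ is not a matter of ``grouping the admissible matchings by double coset'': it is precisely the identity $L^n_{\lambda,\mu}=b^n_{\lambda,\mu}/(2^nn!)$ recalled in Equation \ref{eq:lb} of the paper (\cite[Cor 2.3]{GJ1}), whose proof requires the structure theory of the Hecke algebra of the Gelfand pair $(S_{2n},B_n)$. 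Concretely, one must show that the map sending an ordered factorization $(\sigma,\tau)\in K_\lambda\times K_\mu$ of a fixed element of $K_{(n)}$ to a pairing is exactly $|B_n|$-to-one, or run the equivalent zonal-character computation; this is where the normalisation $1/|B_n|$ is decided and where any spurious power of $2$ from $|K_\lambda|=|B_n|^2/(2^{\ell(\lambda)}z_\lambda)$ would creep in. Your proposed ``orbit-counting argument'' via $B_n$-invariant sums is the right tool, but as written the crucial fibre-size claim is asserted rather than proved, and it carries essentially all of the difficulty of the real case.
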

We note $x^n_{p,q} = \sum_{\lambda,\mu \vdash n; \ell(\lambda) = p;\ell(\mu) = q}x_{\lambda, \mu}^{n}$ for $x = b$ or $c$. As an immediate corollary we have:
\begin{corollary}\label{cor:HSS}
\begin{equation}  
Q^{\mathbb{R}}_n(l,m)= \frac{1}{|B_n|}\sum_{p,q \geq 1}b^n_{p,q}l^pm^q
\end{equation}
\begin{equation}
\label{eq:cor2}
Q^{\mathbb{C}}_n(l,m)= \sum_{p,q \geq 1}c^n_{p,q}l^pm^q
\end{equation}
\end{corollary}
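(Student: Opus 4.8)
The plan is to obtain this Corollary directly from Theorem~\ref{thm:HSS} by specializing the given symmetric (resp.\ hermitian) matrices to $X=I_l$ and $Y=I_m$. Since $Q^{\mathbb{R}}_n(l,m)$ and $Q^{\mathbb{C}}_n(l,m)$ are by definition $P^{\mathbb{R}}_n(I_l,I_m)$ and $P^{\mathbb{C}}_n(I_l,I_m)$, the whole argument reduces to evaluating the power sum symmetric functions appearing in \eqref{eq:HSS} and \eqref{eq:HSS2} at the spectra of $I_l$ and $I_m$, and then reorganizing the resulting double sum.

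The one computation to carry out is the evaluation $p_\lambda(I_l)$. By construction $I_l$ is the diagonal matrix with exactly $l$ entries equal to $1$ and $m-l$ entries equal to $0$, so its eigenvalues consist of $l$ ones together with $m-l$ zeros. Hence, for every integer $k\geq 1$,
\[
p_k(I_l)=l\cdot 1^{k}+(m-l)\cdot 0^{k}=l,
\]
and consequently $p_\lambda(I_l)=\prod_i p_{\lambda_i}(I_l)=l^{\ell(\lambda)}$ for any $\lambda\vdash n$. The identical reasoning gives $p_\mu(I_m)=m^{\ell(\mu)}$. Thus each power sum depends on its partition only through its number of parts, which is exactly the feature that collapses the partition-indexed sum into a sum over part-counts.

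Substituting these values into \eqref{eq:HSS} yields
\[
Q^{\mathbb{R}}_n(l,m)=P^{\mathbb{R}}_n(I_l,I_m)=\frac{1}{|B_n|}\sum_{\lambda,\mu\vdash n}b^{n}_{\lambda,\mu}\,l^{\ell(\lambda)}\,m^{\ell(\mu)},
\]
and it then remains only to group the pairs $(\lambda,\mu)$ according to $\ell(\lambda)=p$ and $\ell(\mu)=q$. Invoking the definition $b^{n}_{p,q}=\sum_{\ell(\lambda)=p,\,\ell(\mu)=q}b^{n}_{\lambda,\mu}$ gives the first identity; since every nonempty partition of $n$ has at least one part, $p$ and $q$ range over $\{1,\dots,n\}$, which accounts for the lower limit $p,q\geq 1$ in the statement. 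The complex case is word-for-word the same applied to \eqref{eq:HSS2}, producing $Q^{\mathbb{C}}_n(l,m)=\sum_{p,q\geq 1}c^{n}_{p,q}\,l^{p}m^{q}$ without the prefactor $1/|B_n|$. I do not anticipate any genuine obstacle here: the connection-coefficient interpretation is not needed, and the entire content of the Corollary is the elementary specialization $p_\lambda(I_l)=l^{\ell(\lambda)}$ followed by collecting terms of equal length, which is why it is stated as an immediate consequence.
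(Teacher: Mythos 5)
Your proposal is correct and is exactly the argument the paper leaves implicit when it calls the corollary ``immediate'': specialize $X=I_l$, $Y=I_m$ in Theorem~\ref{thm:HSS}, use $p_\lambda(I_l)=l^{\ell(\lambda)}$ and $p_\mu(I_m)=m^{\ell(\mu)}$, and collect terms according to the lengths $p=\ell(\lambda)$, $q=\ell(\mu)$ via the definition of $b^n_{p,q}$ and $c^n_{p,q}$. Nothing is missing.
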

\subsection{Computation of connections coefficients}
Despite the attention the problem received and the elegant combinatorial interpretations of the coefficients $c^n_{\lambda,\mu}$ and $b_{\lambda, \mu}^{n}$, no closed formulas are known except for very special cases. Using an inductive argument B\'{e}dard and Goupil  \cite{BG} first found a formula for $c^n_{\lambda,\mu}$ in the case $\ell(\la)+\ell(\mu)=n+1$, which was later reproved by Goulden and Jackson \cite{GJ92} via a bijection with a set of ordered rooted bicolored trees. Later, using characters of the symmetric group and a combinatorial development, Goupil and Schaeffer \cite{GS} derived an expression for connection coefficients of arbitrary genus as a sum of positive terms (see Biane \cite{PB} for a succinct algebraic derivation; and Poulalhon and Schaeffer \cite{PS}, and Irving \cite{JI} for further generalizations). 
Closed form formulas can be found when considering the expansion of the generating series in the RHS of Theorem \ref{thm:HSS} (resp. of corollary \ref{cor:HSS}) in the basis of the $m_{\lambda}(X)m_{\mu}(Y)$ (resp. $\binom{l}{p}\binom{m}{q}$). Jackson (\cite{J}) computed an elegant expression for a generalized version of the RHS of Equation \ref{eq:cor2} whose specialization proves Theorem \ref{thm:comp} when combined with Theorem \ref{thm:HSS}. The proof is algebraic and relies on the theory of the characters of the symmetric group. Schaeffer and Vassilieva in \cite{SV}, Vassilieva in \cite{V} and Morales and Vassilieva in \cite{MV} and \cite{MV2} provided the first purely bijective computations of the generating series in the RHS of (\ref{eq:HSS2}) and (\ref{eq:cor2}).\\
\indent Known results about the coefficients $b_{\lambda, \mu}^{n}$ are much more limited. As shown in \cite{HSS,GJ1}, the generating function in the RHS of equation of Equation \ref{eq:HSS} can be expanded in the basis of {\em zonal polynomials} with simple coefficients. The expression of zonal polynomials in terms of monomial symmetric function is however non trivial and unknown in the general case. In \cite{GJ3}, Goulden and Jackson conjectures that the coefficients $b_{\lambda, \mu}^{\nu}$ can be expressed as a counting series for hypermaps in locally orientable surfaces with respect to some statistics and proved the conjecture for $\la = [1^n]$ and $[1^{n-1}2^1]$.\\
\indent In this paper, we provide the first explicit monomial expansion of the RHS of Equation \ref{eq:HSS} thanks to a new bijection for locally orientable hypermaps. As the method is purely bijective, it provides a combinatorial interpretation of the coefficients in the monomial expansion and allows simple alternative combinatorial computations of some of these coefficients.   When specialized to the case of orientable hypermaps the proposed bijection simplifies considerably and becomes equivalent to the bijection proposed in \cite{MV}. This special case provides the monomial expansion of the RHS of Equation \ref{eq:HSS2} and proves Theorem \ref{thm:comp}. Using the proper parameters, the bijection and its special case prove the formulas of Theorems \ref{thm:cor} and \ref{thm:corcomp}.

\section{Combinatorial formulation}
\subsection{Unicellular locally orientable hypermaps}
From a topological point of view, a {\bf locally orientable hypermap} of n edges can be defined as a connected bipartite graph with black and white vertices. Each edge is composed of two half edges both connecting the two incident vertices.
This graph is embedded in a locally orientable surface such that if we cut the graph from the surface, the remaining part consists of connected components called  faces or cells, each homeomorphic to an open disk. The map can be represented as a ribbon graph on the plane keeping the incidence order of the edges around each vertex. In such a representation,  two half edges can be parallel or cross in the middle. A  crossing (or a twist) of two half edges indicates a change of orientation in the map and that the map is embedded in a non orientable surface (projective plane, Klein bottle,...).  
We say a hypermap is {\bf rooted} if it has a distinguished half edge. In \cite{GJ1}, it was shown that rooted hypermaps admit a natural formal description involving triples of perfect pairings $(f_1, f_2, f_3)$ on the set of half edges  where:
\begin{itemize}
\item $f_3$ associates half edges of the same edge,
\item $f_1$ associates immediately successive (i.e. with no other half edges in between) half edges moving around the white vertices, and 
\item $f_2$ associates immediately successive half edges moving around the black vertices.
\end{itemize}

Formally we label each half edge with an element in $[n]\cup [\widehat{n}]=\{1,\ldots,n,\widehat{1},\ldots,\widehat{n}\}$, labeling the rooted half edge by $1$. We then define $(f_1, f_2, f_3)$ as perfect pairings on this set.
Combining the three pairings gives the fundamental characteristics of the hypermap since:
\begin{itemize}
\item The cycles of $f_3\circ f_1$ give the succession of edges around the white vertices. If $f_3\circ f_1 \in \mathcal{C}_{\lambda\lambda}$ then the degree distribution of the white vertices is $\lambda$ (counting only once each pair of half edges belonging to the same edge),
\item The cycles of $f_3\circ f_2$ give the succession of edges around the black vertices. If $f_3\circ f_2 \in \mathcal{C}_{\mu\mu}$ then the degree distribution of the black vertices is $\mu$ (counting only once each pair of half edges belonging to the same edge),
\item The cycles of $f_1\circ f_2$ encode the faces of the map. If $f_1\circ f_2 \in \mathcal{C}_{\nu\nu}$ then the degree distribution of the faces is $\nu$
\end{itemize}
In what follows, we consider the number  $L_{\lambda, \mu}^{n}$ of rooted {\bf unicellular}, or one-face, locally orientable hypermaps with face distribution $\nu=(n)=n^1$, white vertex distribution $\lambda$, and black vertex distribution $\mu$.

Let $f_1$ be the pairing $(1\,\widehat{n})(2\,\widehat{1})(3\,\widehat{2})\ldots(n\,n\widehat{-}1)$ and $f_2 =f_{\star}= (1\,\widehat{1})(2\,\widehat{2})\ldots(n\,\widehat{n})$. We have $f_1\circ f_2 = (123\ldots n)(\widehat{n}n\widehat{-}1\, n\widehat{-}2\ldots\widehat{1}) \in \mathcal{C}_{(n)(n)}$. Then one can show that
\begin{equation}
L_{\lambda, \mu}^{n} =\,\, \mid \{ f_3 \mbox{ pairings in } S_{2n}([n]\cup [\widehat{n}]) ; f_3\circ f_1 \in \mathcal{C}_{\lambda\lambda}, f_3\circ f_2 \in \mathcal{C}_{\mu\mu} \} \mid.
\end{equation}
Moreover the following relation between $L^n_{\la,\mu}$ and $b^n_{\la,\mu}$ holds \cite[Cor 2.3]{GJ1}:
\begin{equation}
\label{eq:lb}
L_{\lambda, \mu}^{n} = \frac{1}{2^nn!}b_{\lambda, \mu}^{n}
\end{equation}
Thus we can encode the connection coefficients as numbers of locally orientable hypermaps.\\
We can refine the definition of $L_{\lambda, \mu}^{n}$ using the non negative number $r$ of hat/hat (equivalently non-hat/non-hat) pairs in $f_3$:  
\begin{equation}
L_{\lambda, \mu, r}^{n} =\,\, \mid \{ f_3 \mbox{ pairings in } S_{2n}([n]\cup [\widehat{n}]) ; f_3\circ f_1 \in \mathcal{C}_{\lambda\lambda}, f_3\circ f_2 \in \mathcal{C}_{\mu\mu},\mid f_3([\widehat{n}])\cap[\widehat{n}]\mid = r \} \mid.
\end{equation}
Obviously $L_{\lambda, \mu}^{n} = \sum_{r \geq 0}L_{\lambda, \mu, r}^{n}$. The following result holds \cite[Prop 4.1]{GJ3}:
\begin{equation}
\label{eq: or}
L_{\lambda, \mu, 0}^{n} = c_{\lambda, \mu}^{n}
\end{equation}

\begin{example}
Figure \ref{fig:example} depicts a locally orientable unicellular hypermap in $L_{\lambda, \mu, r}^{n}$ with $\lambda=[1^12^23^14^1]$, $\mu = [3^14^15^1]$ and $r=3$ (at this stage we disregard the geometric shapes around the vertices).
\end{example}

\begin{figure}[htbp]
  \begin{center}
    \includegraphics[width=0.33\textwidth]{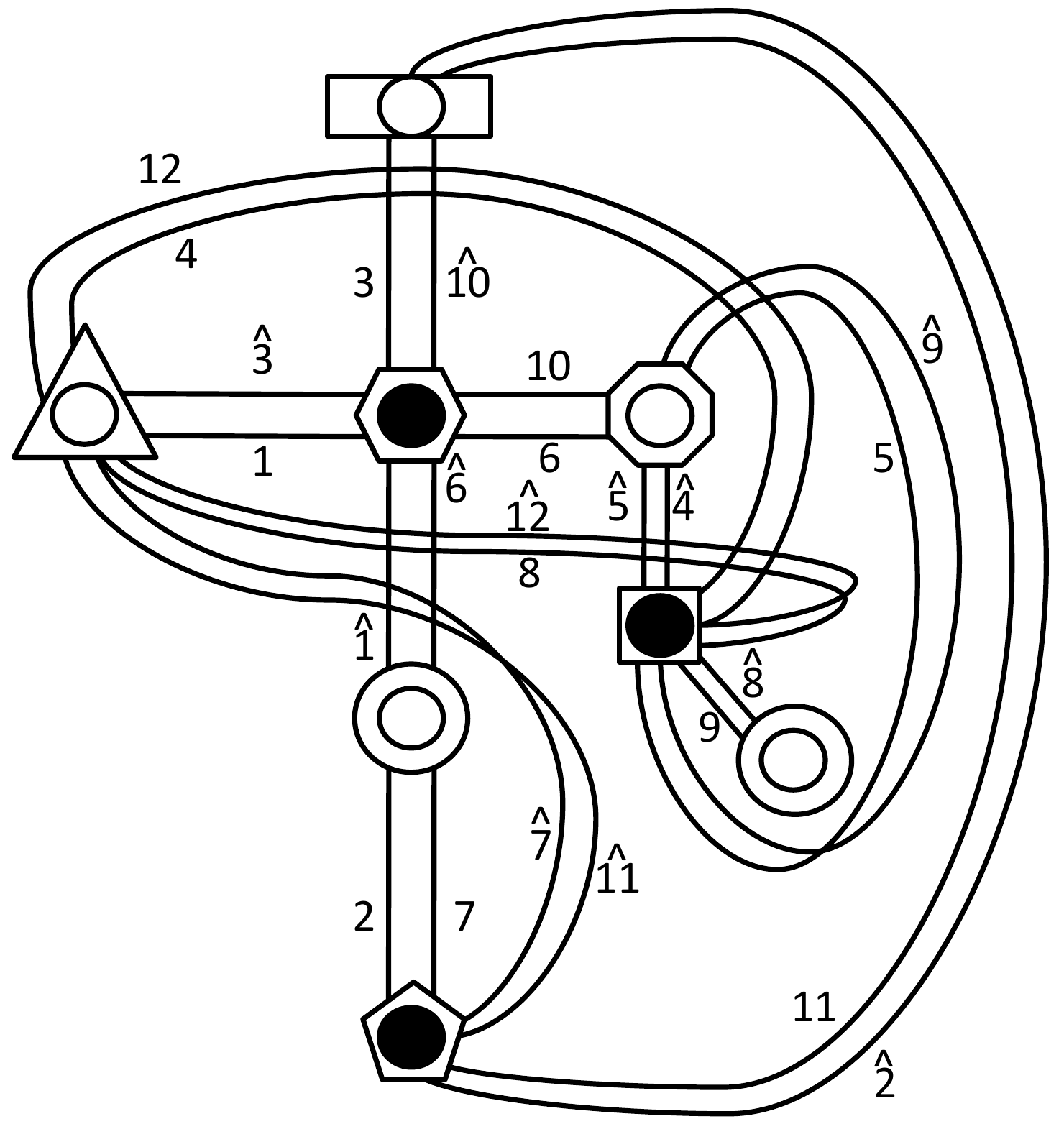}
    \caption{A unicellular locally orientable hypermap}
    \label{fig:example}
  \end{center}
\end{figure}
\subsection{Partitioned locally orientable hypermaps}
We consider locally orientable hypermaps where we partition the set of white vertices (resp. black). In terms of the pairings, this means we ``color'' the cycles of $f_3\circ f_1$ (resp. $f_3\circ f_2$) allowing repeated colors but imposing that the two cycles corresponding to each white (resp. black) vertex have the same color. The following definition in terms of set partitions of $[n]\cup [\widehat{n}]$ makes this more precise.
  
\begin{definition}[Locally orientable partitioned hypermaps]
We consider the set $\mathcal{LP}_{\lambda,\mu}^{n}$ of triples  $(f_3,\pi_1,\pi_2)$ where $f_3$ is a pairing on $[n]\cup [\widehat{n}]$, $\pi_1$ and $\pi_2$ are set partitions on  $[n]\cup [\widehat{n}]$ with blocks of even size and of respective types $2\la$ and $2\mu$ (or {\em half types} $\lambda$ and $\mu$)  with the constraint that $\pi_i$ $(i=1,2)$ is stable by $f_i$ and $f_3$.  Any such triple is called a {\bf locally orientable partitioned hypermap} of type $(\lambda,\mu)$. In addition, let $LP_{\lambda, \mu}^{n} = \mid \mathcal{LP}_{\lambda,\mu}^{n} \mid$. Finally, we note $\mathcal{LP}_{\lambda,\mu,r}^{n}$ and $LP_{\lambda, \mu, r}^{n}$ the set and number of such partitioned hypermaps with $\mid f_3([\widehat{n}])\cap[\widehat{n}]\mid = r$. 
\end{definition}

\begin{remark}
The analogous notion of partitioned or colored map is common in the study of orientable maps (\eg see \cite{L},\cite{GN}). Recently Bernardi in \cite{B} extended the approach in \cite{L} to find a bijection between locally orientable partitioned maps and orientable partitioned maps with a distinguished {\em planar submap}. As far as we know \cite[Sect. 7]{B} this technique does not extend to locally orientable hypermaps.
\end{remark}

\begin{lemma} The number of hat numbers in a block is equal to the number of non hat numbers\end{lemma}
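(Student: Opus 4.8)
The plan is to exploit the fact that, with respect to the splitting of the ground set $[n]\cup[\widehat{n}]$ into hat and non-hat labels, both reference pairings $f_1$ and $f_2$ are \emph{bipartite}: each of their transpositions joins one element of $[n]$ to one element of $[\widehat{n}]$. For $f_2=f_\star=(1\,\widehat{1})(2\,\widehat{2})\cdots(n\,\widehat{n})$ this is immediate, since every pair has the form $(i\,\widehat{i})$. For $f_1=(1\,\widehat{n})(2\,\widehat{1})(3\,\widehat{2})\cdots(n\,\widehat{n-1})$ one reads directly from the cycle notation that the generic transposition is $(i\,\widehat{i-1})$, with $\widehat{0}$ interpreted as $\widehat{n}$, so again every pair contains exactly one non-hat and one hat label. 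In other words, for $i\in\{1,2\}$ the involution $f_i$ restricts to a bijection $[n]\to[\widehat{n}]$.

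First I would fix an arbitrary block $B$ of one of the partitions, say $B$ is a block of $\pi_i$ for some $i\in\{1,2\}$. By the defining constraint of $\mathcal{LP}_{\lambda,\mu}^{n}$, the partition $\pi_i$ is stable under $f_i$ (and under $f_3$); concretely this means that each individual block satisfies $f_i(B)=B$. This is consistent with the vertex-coloring description preceding the definition: a block is a union of color classes, and requiring that the two equal-length cycles of $f_3\circ f_i$ attached to a single vertex share the same color forces each block to be a union of the orbits of $\langle f_i,f_3\rangle$, hence invariant under $f_i$. In particular $f_i$ restricts to a fixed-point-free involution $f_i|_B$ of $B$.

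The key step is then an orbit count on $B$. Decompose $B$ into the $|B|/2$ transpositions of $f_i|_B$. By the bipartite property established above, each such transposition $\{x,f_i(x)\}$ contains exactly one element of $B\cap[n]$ and one element of $B\cap[\widehat{n}]$. Summing over all transpositions yields $|B\cap[n]|=|B\cap[\widehat{n}]|=|B|/2$, which is precisely the assertion that the number of non-hat labels in $B$ equals the number of hat labels. Since $B$ was an arbitrary block of $\pi_1$ or of $\pi_2$, the lemma follows.

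The only genuine subtlety, and hence the main obstacle, is justifying that each individual block is invariant under $f_i$ rather than merely permuted among the blocks; this is exactly where the constraint that a vertex's two cycles receive a common color is used, and one checks that a single cycle of $f_3\circ f_i$ is never $f_i$-invariant, so $f_i(B)=B$ already forces blocks to be unions of whole vertices. Once $f_i(B)=B$ is in hand, the bipartiteness of $f_i$ makes the equality automatic. I would also remark that stability under $f_3$ plays no role here, since $f_3$ may well pair hat with hat or non-hat with non-hat; it is precisely the reference pairings $f_1,f_2$ whose bipartite structure drives the result.
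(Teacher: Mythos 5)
Your proof is correct and follows essentially the same route as the paper: you observe that $f_1$ and $f_2$ each pair a non-hat label with a hat label, and that each block of $\pi_i$ is stable under $f_i$, so the restriction of $f_i$ to a block is a perfect matching between its non-hat and hat elements. The paper's one-line proof is exactly this observation (for a non-hat $i$ in a block of $\pi_1$, $f_1(i)=\widehat{i-1}$ lies in the same block, and similarly for $\pi_2$ via $f_2(i)=\widehat{i}$); your additional discussion of why individual blocks are $f_i$-invariant is a reasonable elaboration of what the paper takes as the definition of stability.
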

\begin{proof} If a non hat number $i$ belongs to block $\pi_1^k$ then $f_1(i) = i\widehat{-}1$ also belongs to $\pi_1^k$. The same argument applies to blocks of $\pi_2$ with $f_2(i)=\widehat{i}$.\end{proof}
\begin{example}
As an example, the locally orientable hypermap on Figure \ref{fig:example} is partitioned into the blocks: 
\begin{eqnarray}
\nonumber \pi_1 &=& \{\{\widehat{12},1,\widehat{3},4,\widehat{7},8,\widehat{11},12\};\{\widehat{1},2,\widehat{6},7,\widehat{8},9\};\{\widehat{2},3,\widehat{10},11\};\{\widehat{4},5,\widehat{5},6,\widehat{9},10\}\}\\
\nonumber \pi_2 &=& \{\{1,\widehat{1},3,\widehat{3},6,\widehat{6},10,\widehat{10}\};\phantom{\{}\{2,\widehat{2},7,\widehat{7},11,\widehat{11}\};\phantom{\{}\{4,\widehat{4},5,\widehat{5},8,\widehat{8},9,\widehat{9},12,\widehat{12}\}\}
\end{eqnarray}
(blocks are depicted by the geometric shapes around the vertices, all the vertices belonging to a block have the same shape).
\end{example}

Let $\overline{R}_{\lambda,\mu}$ be the number of unordered partitions $\pi=\{\pi^{1}, \ldots, \pi^{p}\}$ of the set $[\ell(\lambda)]$ such that $\mu_j = \sum_{i\in \pi^j} \lambda_i$ for $1\leq j \leq \ell(\mu)$. Then for the monomial and power symmetric functions $m_{\lambda}$ and $p_{\lambda}$ we have $p_{\lambda} = \sum_{\mu \succeq \lambda} Aut_{\mu} \overline{R}_{\lambda,\mu} m_{\mu}$ \cite[Prop.7.7.1]{EC2}. We use this to obtain a relation between $L_{\lambda,\mu}^n$ and $LP_{\lambda,\mu}^n$.
 
\begin{proposition} \label{prop1} For partitions $\rho, \epsilon \vdash n$ and $r\geq 0$ we have  $LP_{\nu,\rho,r}^n =  \sum_{\lambda,\mu} \overline{R}_{\lambda\nu}\overline{R}_{\mu\rho}L_{\lambda,\mu,r}^n$, where  $\lambda$ and $\mu$ are refinements of $\nu$ and $\rho$ respectively.
\end{proposition}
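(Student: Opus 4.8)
The plan is to set up a direct counting argument that decouples the choice of the underlying pairing $f_3$ from the choices of the two set partitions $\pi_1$ and $\pi_2$. The starting observation is that, since $\pi_1$ is required to be stable under both $f_1$ and $f_3$, each of its blocks must be a union of $\langle f_1,f_3\rangle$-orbits on $[n]\cup[\widehat{n}]$. As already exploited in the Lemma above, the orbit of $\langle f_1,f_3\rangle$ through a given half edge is precisely the set of $2\lambda_i$ half edges carried by one white vertex, namely the two length-$\lambda_i$ cycles of $f_3\circ f_1$ that make up that vertex. Consequently every block of $\pi_1$ is an exact union of whole white vertices, and its half size equals the sum of the degrees of the white vertices it contains. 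The same argument applied to $f_2$ and $f_3$ shows that every block of $\pi_2$ is an exact union of whole black vertices. I would record this strengthening of the preceding Lemma first.

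Given this, I would fix an underlying hypermap, that is, a pairing $f_3$ with $f_3\circ f_1\in\mathcal{C}_{\lambda\lambda}$, $f_3\circ f_2\in\mathcal{C}_{\mu\mu}$ and $\mid f_3([\widehat n])\cap[\widehat n]\mid=r$, and count the partitions $\pi_1$ of half type $\nu$ compatible with it. By the previous paragraph such a $\pi_1$ is the same data as an unordered partition of the set of $\ell(\lambda)$ (distinguishable) white vertices into blocks whose degree sums form the partition $\nu$. Labelling the white vertices so that the $i$-th has degree $\lambda_i$ identifies these groupings with the unordered partitions of $[\ell(\lambda)]$ counted by $\overline{R}_{\lambda\nu}$; crucially this number depends only on $\lambda$ and $\nu$, not on the particular $f_3$. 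Symmetrically the admissible $\pi_2$ of half type $\rho$ are counted by $\overline{R}_{\mu\rho}$, and the two choices are made independently of one another. Note also that $r$ is an invariant of $f_3$ alone, so it is unaffected by the choice of $\pi_1,\pi_2$ and matches on both sides of the claimed identity.

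Assembling these pieces, I would partition $\mathcal{LP}_{\nu,\rho,r}^n$ according to the white and black vertex distributions $(\lambda,\mu)$ of the underlying $f_3$. For each such pair the number of underlying pairings is $L_{\lambda,\mu,r}^n$ by definition, and each of them contributes exactly $\overline{R}_{\lambda\nu}\,\overline{R}_{\mu\rho}$ partitioned hypermaps, whence $LP_{\nu,\rho,r}^n=\sum_{\lambda,\mu}\overline{R}_{\lambda\nu}\overline{R}_{\mu\rho}L_{\lambda,\mu,r}^n$. The sum is automatically restricted to $\lambda$ refining $\nu$ and $\mu$ refining $\rho$, since $\overline{R}_{\lambda\nu}$ vanishes unless $\nu$ is obtained by merging parts of $\lambda$.

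The step requiring the most care is the first one: verifying that the stability constraints force the blocks to be unions of \emph{entire} vertices rather than merely $f_1$- (resp. $f_2$-)stable unions of cycles. The given Lemma only records the hat/non-hat balance within a block, so I would make the orbit computation explicit, checking that $\langle f_1,f_3\rangle$ acts transitively on the $2\lambda_i$ half edges of a single white vertex, in order to be certain that the grouping data is genuinely a partition of the vertex set and that the half-size bookkeeping reproduces exactly the condition defining $\overline{R}_{\lambda\nu}$.
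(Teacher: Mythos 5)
Your proof is correct and follows essentially the same route as the paper: decompose $\mathcal{LP}_{\nu,\rho,r}^n$ according to the cycle types $(\lambda,\mu)$ of $(f_3\circ f_1, f_3\circ f_2)$ and observe that each underlying pairing $f_3$ admits exactly $\overline{R}_{\lambda\nu}\overline{R}_{\mu\rho}$ compatible pairs $(\pi_1,\pi_2)$. The only difference is that the paper dismisses this last count as ``easy to see,'' whereas you supply the justification (blocks are unions of $\langle f_1,f_3\rangle$-orbits, i.e.\ of whole white vertices), which is exactly the right detail to check.
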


\begin{proof} 
Let $(f_3,\pi_1,\pi_2) \in \mathcal{LP}_{\nu,\rho,r}^n$. If $f_3\circ f_1 \in \mathcal{C}_{\lambda\lambda}$ and $f_3\circ f_2 \in \mathcal{C}_{\mu\mu}$  then by definition of the set partitions we have that $\lambda$ and $\mu$ are refinements of  $\ty(\pi_1)=\nu$ and $\ty(\pi_2)=\rho$ respectively. Thus, we can classify the elements of $\mathcal{LP}_{\nu,\rho,r}^n$ by the cycle types of $f_3\circ f_1$ and $f_3\circ f_2$.  \ie  $\mathcal{LP}_{\nu,\rho,r}^n=\bigcup_{\lambda,\mu} \mathcal{LP}_{\nu,\rho,r}^n(\lambda,\mu)$, where
$$
\mathcal{LP}_{\nu,\rho,r}(\lambda,\mu) = \{ (f_3,\pi_1,\pi_2) \in \mathcal{LP}^n_{\nu,\rho,r} ~|~ (f_3\circ f_1,f_3\circ f_2) \in \mathcal{C}_{\lambda\lambda} \times \mathcal{C}_{\mu\mu}\}.
$$
If $LP_{\mu\rho,r}^n(\lambda,\mu) = |\mathcal{LP}_{\mu\rho,r}^n(\lambda,\mu)|$ then it is easy to see that $LP_{\mu,\rho,r}^n(\lambda,\mu)= \overline{R}_{\lambda\nu}\overline{R}_{\mu\rho} L_{\lambda\mu,r}^{n}$. 
\end{proof}

The change of basis between $p_{\lambda}$ and $m_{\lambda}$ immediately relates the generating series for $L^n_{\lambda,\mu,r}$ and the generating series for $LP^n_{\lambda,\mu,r}$ in  monomial symmetric functions: 
\begin{equation} \label{lem:lp}
\sum_{\lambda,\mu \vdash n}L_{\lambda, \mu,r}^{n}p_{\lambda}({\bf x})p_{\mu}({\bf y}) = \sum_{\lambda,\mu \vdash n}Aut_{\lambda}Aut_{\mu}LP_{\lambda, \mu,r}^{n}m_{\lambda}({\bf x})m_{\mu}({\bf y})
\end{equation}
Summing over $r$ gives:
\begin{equation}
\sum_{\lambda,\mu \vdash n}L_{\lambda, \mu}^{n}p_{\lambda}({\bf x})p_{\mu}({\bf y}) = \sum_{\lambda,\mu \vdash n}Aut_{\lambda}Aut_{\mu}LP_{\lambda, \mu}^{n}m_{\lambda}({\bf x})m_{\mu}({\bf y})
\end{equation}
For $l$ and $p$ non negative integers, we note $(l)_p = l(l-1)\ldots(l-p+1)$. We have  $m_\la(I_l) = (l)_{\ell(\la)}/Aut_\la$ and:
\begin{equation}
\label{eq:nodeg}
\sum_{p,q}L_{p, q, r}^{n}l^pm^q =\sum_{p,q}LP_{p, q, r}^{n}(l)_p(m)_q,
\end{equation}
where $LP_{p,q,r}^n = \sum_{\lambda,\mu \vdash n; \ell(\lambda) = p;\ell(\mu) = q}LP_{\lambda, \mu, r}^{n}$ (a similar definition applies to $L_{p,q,r}^n$).

\begin{definition}
Let  $\mathcal{LP}({\bf A})$ be the set of cardinality $LP({\bf A})$ of partitioned locally orientable hypermaps with $n$ edges where ${\bf A}= (P,P',Q,Q')$ are bidimensional arrays such that for $i,j \geq 0$:
\begin{itemize}
\item $P_{ij}$ (resp. $P'_{ij}$)  is the number of blocks of $\pi_1$ of half size $i$ that do not contain $1$ and such that: 
\begin{itemize}
\item[(i)]   its maximum {\bf non-hat} number is paired to a {\bf hat} (resp.  {\bf non-hat}) number by $f_3$  
\item[(ii)] the block contains $j$ pairs $\{t,f_3(t)\}$ where both $t$ and $f_3(t)$ are {\bf non-hat} numbers.
\end{itemize}

\item $Q_{ij}$ (resp. $Q'_{ij}$) is the number of blocks of $\pi_2$ of half size $i$ such that:
\begin{itemize}
\item[(i)] the maximum {\bf hat} number of the block is paired to a {\bf non-hat} (resp. {\bf hat}) number by $f_3$,
\item[(ii)] the block contains $j$ pairs $\{t,f_3(t)\}$ where both $t$ and $f_3(t)$ are {\bf hat} numbers.
\end{itemize}
\end{itemize}
\end{definition}
As a direct consequence we get:

\begin{equation}\label{eq:lpa}{LP}_{\lambda,\mu,r}^{n} = \sum_{{\bf A} \in M_{\lambda,\mu}^r}LP({\bf A})\end{equation}
\begin{example}
The partitioned hypermap on Figure \ref{fig:example} belongs to $\mathcal{LP}({\bf A})$ for $P = E_{3,1}+ E_{2,0}$, $P' = E_{3,1}$, $Q= E_{5,1}+E_{4,1}$, $Q' = E_{3,1}$ where $E_{t,u}$ is the  elementary array with entry $1$ at position $(t,u)$ and $0$ elsewhere.
\end{example}

\subsection{Permuted forests and reformulation of the main theorem}
We show that partitioned locally orientable hypermaps admit a nice bijective interpretation in terms of some recursive forests defined as follows:
\begin{definition}[Rooted bicolored forests of degree {\bf A}]\label{def:forest} In what follows we consider the set $\mathcal{F}({\bf A})$ of permuted rooted forests composed of:
\begin{itemize}
\item a bicolored identified ordered {\bf seed tree} with a white root vertex,
\item other bicolored ordered trees, called {\bf non-seed trees}  with either a white or a black root vertex,
\item each vertex of the forest has three kind of ordered descendants: {\bf tree-edges} (connecting a white and a black vertex), {\bf thorns} (half edges connected to only one vertex) and {\bf loops} connecting a vertex to itself. The two {\em extremities} of the loop are part of the ordered set of descendants of the incident vertex and therefore the loop can be intersected by thorns, edges and other loops as well.
\end{itemize}
The forests in $\mathcal{F}({\bf A})$ also have the following properties:
\begin{itemize}
\item the root vertices of the non-seed trees have at least one descending loop with one extremity being the rightmost descendant of the considered vertex,
\item the total number of thorns (resp. loops) connected to the white vertices is equal to the number of thorns (resp. loops) connected to the black ones,
\item there is a bijection between thorns connected to white vertices and the thorns connected to black vertices. The bijection between thorns will be encoded by assigning the same symbolic {\em latin} labels $\{a,b,c,\ldots\}$ to thorns associated by this bijection,
\item there is a mapping that associates to each loop incident to a white (resp. black) vertex, a black (resp. white) vertex ${\rm v}$  such that the number of white (resp. black) loops associated to a fixed black (resp. white) vertex ${\rm v}$ is equal to its number of incident loops. We will use symbolic {\em greek} labels $\{\alpha,\beta,\ldots\}$ to associate loops with vertices except for the maximal loop (i.e. the loop whose rightmost extremity is the rightmost descendant of the considered vertex) of a root vertex ${\rm r}$ of the non-seed trees. In this case, we draw an arrow (\begin{tikzpicture} \draw[very thick,densely dashed] [->] (0,0) to (0.8,0);   \end{tikzpicture}) outgoing from the root vertex ${\rm r}$ and incoming to the vertex associated with the loop. Arrows are non ordered, and :
\item the ascendant/descendant structure defined by the edges of the forest and the arrows defined above is a tree structure rooted in the root of the seed tree.
\end{itemize}
Finally the degree ${\bf A}$ of the forest is given in the following way:
\begin{itemize}
\item[(vii)] $P_{ij}$ (resp $P'_{ij}$) counts the number of non root white vertices (resp. white root vertices excluding the root of the seed tree) of degree $i$ with a total number of $j$ loops,
\item[(viii)] $Q_{ij}$ (resp $Q'_{ij}$) counts the number of non root black vertices (resp. black root vertices) of degree $i$ with a total number of $j$ loops.
\end{itemize}
\end{definition}

\begin{example}
As an example, Figure \ref{fig:exrecons} depicts two permuted forests. The one on the left is of degree ${\bf A}=(P,P',Q,Q')$ for $ E_{3,1}+E_{2,0}$, $P' = E_{3,1}$, $Q= E_{5,1} + E_{4,1}$, and  $Q' = E_{3,1}$ while the one on the right is of degree ${\bf A^{(2)}}=(P^{(2)},P'^{(2)},Q^{(2)},Q'^{(2)})$ for $P^{(2)}= E_{4,1}$, $P'^{(2)}=\{0\}_{i,j}$, $Q^{(2)}=E_{7,2}$, and $Q'^{(2)}=E_{4,2}$.

\begin{figure}[htbp]
  \begin{center}
    \includegraphics[width=60mm]{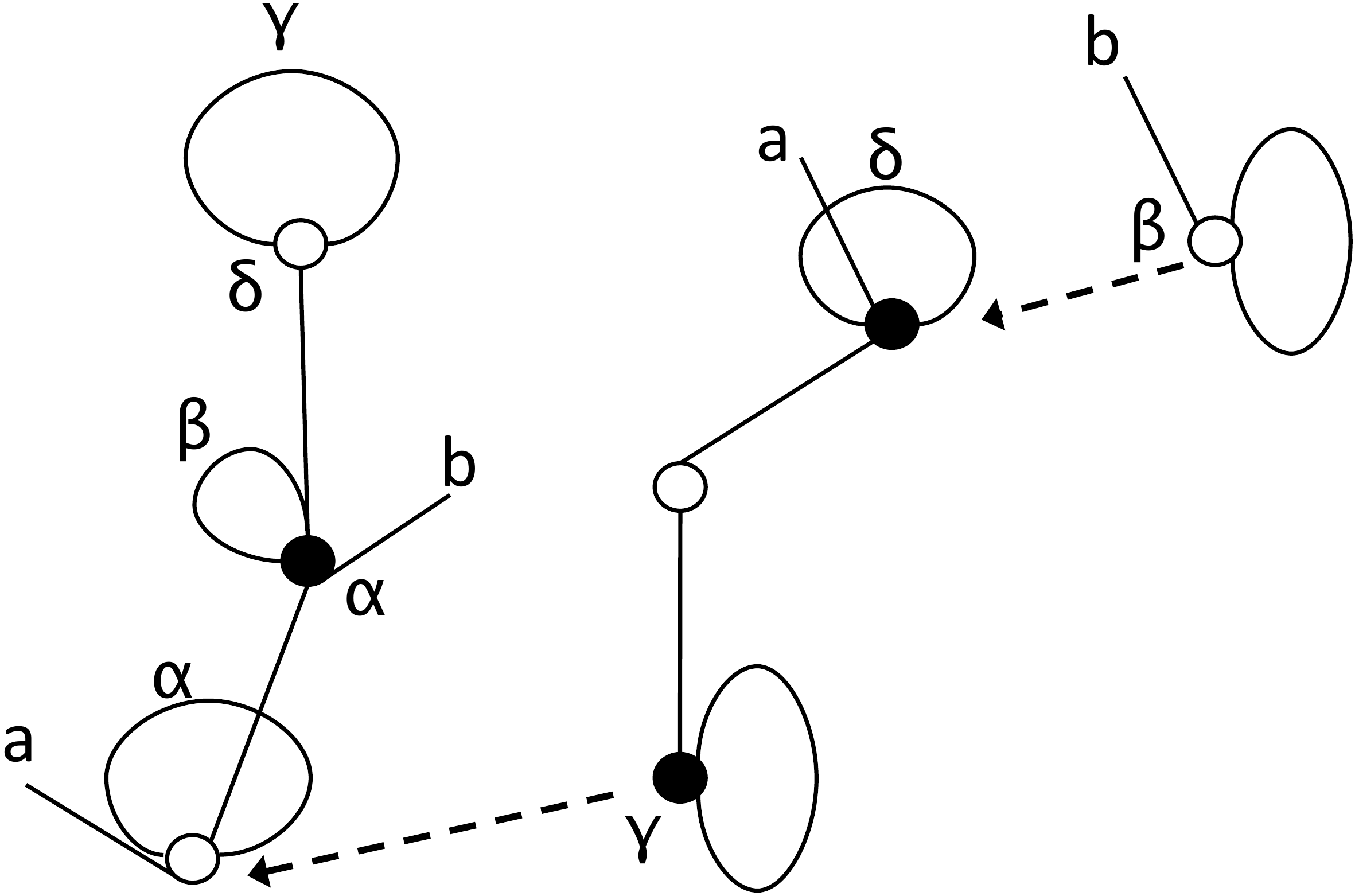}\hspace{15mm}
    \includegraphics[width=30mm]{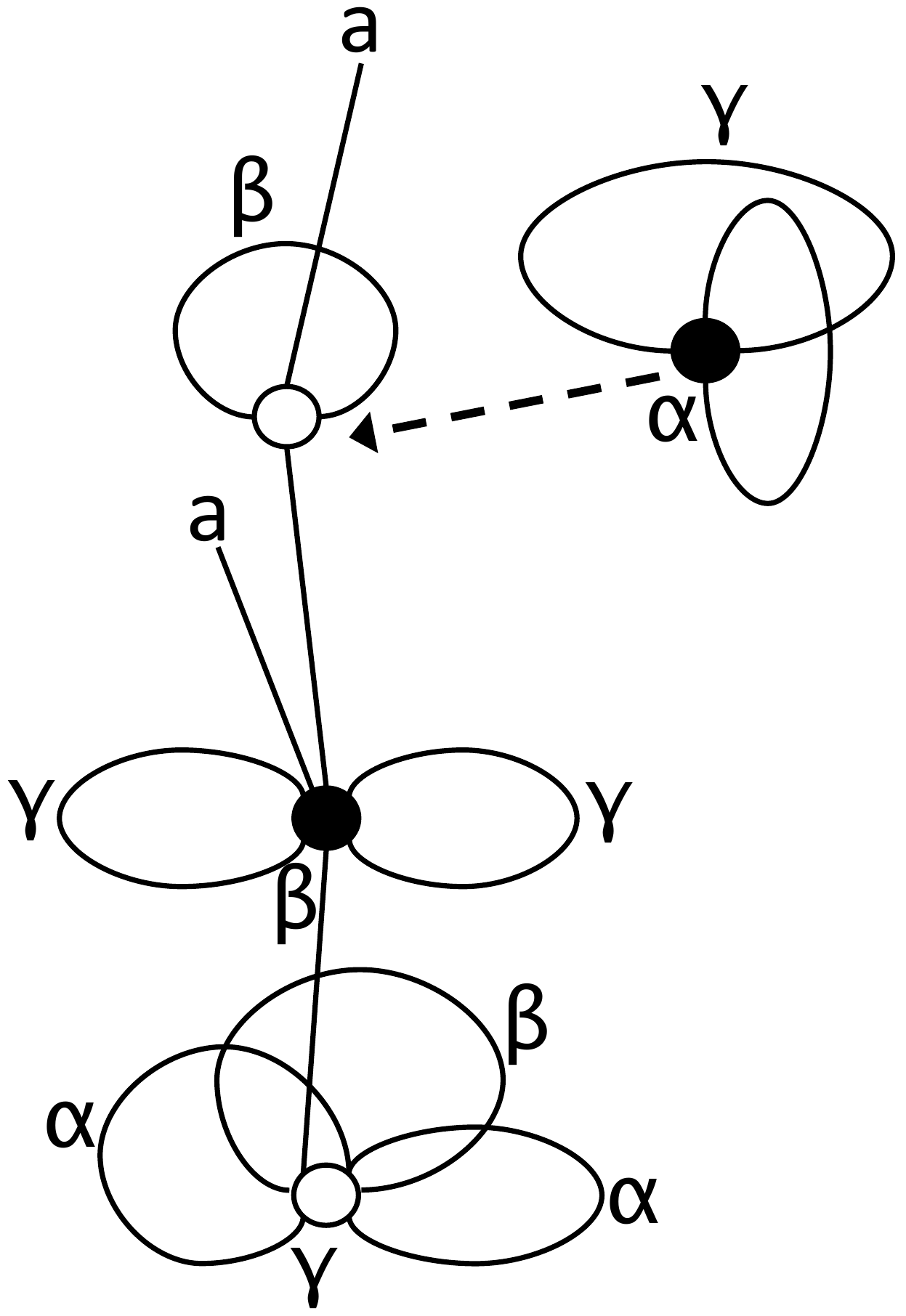}
    \caption{Two Permuted Forests}
    \label{fig:exrecons}
  \end{center}
\end{figure}
\end{example}
\begin{lemma}
\label{lem: lag}
Let $F({\bf A})$ be the cardinality of the set of forests $\mathcal{F}({\bf A})$ defined above. We have:
\begin{eqnarray}
\label{eq:degrees}
F({\bf A}) = \frac{\mathcal{I}({\bf A})}{{\bf A}!}\frac{r!^2(n-q-2r)!(n-1-p-2r)!}{2^{2r-p'-q'}(n-p-q-2r)!}\prod_{i,j}{\binom{i-1}{j,j}}^{(P+Q)_{i,j}}{\binom{i-1}{j,j-1}}^{(P'+Q')_{i,j}}
\end{eqnarray}
Let $F_{p,p',q,q',r}$ be the total number of forests with degree ${\bf A} = (P,P',Q,Q')$ for $p=|P|+1$ (in the following formula we count the root of the seed tree as an internal white vertex), $p'=|P'|$, $q=|Q|$, $q'=|Q'|$ and $r = \sum_j j(Q + Q')_{i,j}$. We have :
\begin{eqnarray}
\label{eq:nodegrees}
F_{p,p',q,q',r} = \frac{n!}{p!p'!q!q'!}\binom{n+2r-1}{p+2r-1,q+2r-1}{\binom{n+2r-1}{r,r}}^{-1}2^{2r-p'-q'}\alpha_{r,p,q,p',q'}
\end{eqnarray}
\normalsize
\end{lemma}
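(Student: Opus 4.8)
The statement is a pure enumeration of the forest set $\mathcal{F}({\bf A})$, so my plan is to count these forests by decomposing a forest into independent pieces of data, count each piece, multiply, and finally divide by the symmetries of vertices of equal type. Concretely I would organize the data as: (i) the internal arrangement of the ordered descendants of each vertex; (ii) the \emph{latin} labelling realizing the thorn bijection; (iii) the \emph{greek} labelling and the arrows realizing the loop-to-vertex association; and (iv) the global tree obtained from the tree-edges together with the arrows. The product formula \eqref{eq:degrees} should then read off these four contributions, with the factor $1/{\bf A}!$ accounting for permutations of equal-type vertices within each of the four classes $P,P',Q,Q'$.

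For the local data (step (i)), a non-root vertex of degree $i$ carrying $j$ loops offers $i-1$ ordered descendant slots, of which $2j$ must host the two extremities of the $j$ loops; interleaving the loop extremities with the remaining thorns and tree-edges gives the factor $\binom{i-1}{j,j}$, explaining the $(P+Q)_{ij}$ exponents. A non-seed root carries the extra constraint that its rightmost descendant be the right extremity of its maximal loop; this removes exactly one degree of freedom and replaces $\binom{i-1}{j,j}$ by $\binom{i-1}{j,j-1}$, explaining the $(P'+Q')_{ij}$ exponents, while the seed root (which has no parent and no such constraint) contributes the separate factor $\binom{i_0}{j_0,j_0}$ visible inside $\mathcal{I}({\bf A})$. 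Steps (ii) and (iii) supply the remaining elementary factors: one matches the equal numbers of white and black thorns and sequences the non-loop half-edges of each colour, and after using the degree relations $\sum_{i,j} i(P+P')_{ij}+i_0=n=\sum_{i,j} i(Q+Q')_{ij}$ and $\sum_{i,j} j(Q+Q')_{ij}=r$ to count these half-edges and to pair the surviving ones into the $p+q$ tree-edges, one obtains the quotient $\tfrac{r!^2(n-q-2r)!(n-1-p-2r)!}{(n-p-q-2r)!}$; the powers of two $2^{2r-p'-q'}$ record the two extremities of each of the $r$ white and $r$ black loops, the $p'+q'$ maximal loops at the non-seed roots being exceptional.

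The main obstacle is step (iv). The tree-edges alone split the forest into $1+p'+q'$ ordered trees (the seed tree and the non-seed trees), and it is the arrows — one outgoing from each non-seed root to the vertex carrying the image of its maximal loop — that must glue these into a single tree rooted at the seed. Counting the admissible arrow systems is a genuine rooted-tree enumeration: the arrows define a functional digraph on the set of trees which is required to be acyclic and to flow toward the seed, so this is where a Lagrange-type inversion (equivalently a cycle-lemma / sequential-attachment argument) is needed. I expect the seed root to enter as a marked vertex whose contribution is its number of available attachment slots, namely $i_0-2j_0$ in the loopless directions, corrected by the interaction of its $j_0$ loops and the incoming arrows with the global structure. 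Matching this marked contribution against the bracketed rational expression in $\mathcal{I}({\bf A})$ — in particular reproducing the two correction sums over $Q'$ and $P$ divided by $r^2$ and $r^2(n-q-2r)$ — is the delicate computation, and it is the step I would expect to occupy most of the work; the special case $r=0$ (where $p'=q'=0$, there are no arrows, and $\mathcal{I}({\bf A})=i_0$) is a useful consistency check.

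Finally, for \eqref{eq:nodegrees} I would sum $F({\bf A})$ over all arrays with $|P|=p-1$, $|P'|=p'$, $|Q|=q$, $|Q'|=q'$ and fixed total loop number $r$. Because the dependence on each vertex class is through a product over $(i,j)$ divided by $\prod(\cdot)!$, the sum is an exponential-formula computation: the relevant generating functions are $\sum_{i,j}\binom{i-1}{j,j}z^{i-1}t^j$ and $\sum_{i,j}\binom{i-1}{j,j-1}z^{i-1}t^j$, whose loop-generating parts $\sum_j\binom{m}{j,j}t^j$ resum to square-root-type series $(1-\cdots)^{-1/2}$, so that raising them to one power per white, resp. black, vertex produces half-integer exponents and turns the extraction of the $t^r$ coefficient into the negative-binomial coefficients $\binom{-(p+a)/2}{r}$ and $\binom{-(q+b)/2}{r}$. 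An inclusion-exclusion separating the primed (root) from the unprimed vertices, based on the elementary identity relating $\binom{i-1}{j,j-1}$ to $\binom{i-1}{j,j}$, then produces the binomials $\binom{p'}{a}\binom{q'}{b}$ together with the signs $(-1)^{p'+q'-a-b}$; collecting everything yields $\alpha_{r,p,q,p',q'}$ and the stated prefactor. This last part is routine but lengthy generating-function algebra, the only real subtlety being to carry the $\mathcal{I}({\bf A})$ correction faithfully through the summation.
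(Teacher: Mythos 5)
Your plan correctly identifies the local contributions (the factors $\binom{i-1}{j,j}$ and $\binom{i-1}{j,j-1}$ per vertex, the $1/{\bf A}!$ for unordered vertices of equal type, the role of the thorn bijection) and correctly senses that the arrow/greek-label structure is where a Lagrange-type argument must enter. But as written it is a plan, not a proof: the entire content of Equation \ref{eq:degrees} is concentrated in the factor $\mathcal{I}({\bf A})$, and you explicitly defer its derivation (``matching this marked contribution against the bracketed rational expression in $\mathcal{I}({\bf A})$ \ldots is the delicate computation''). That bracketed expression, with its correction terms $\sum_{i,j}jQ'\,(j_0(n-p)-ri_0)/r^2$ and $\bigl(\sum_{i,j}((n-q)j-ir)Q'\bigr)\bigl(\sum_{i,j}(i_0j-j_0(i-1))P\bigr)/\bigl(r^2(n-q-2r)\bigr)$, does not fall out of a cycle-lemma or sequential-attachment count of arrow systems alone; in the paper it emerges only from the determinant $\mid\mid \delta_{ij}k_j-\mu_{ij}\mid\mid$ in the multivariate Lagrange inversion formula applied to a system of seven coupled functional equations (for $H,W,W',B,B',A_w,A_b$), followed by a summation over nine auxiliary indices via repeated Vandermonde convolutions. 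Moreover, your premise that the count factors as (local data) $\times$ (thorn bijection) $\times$ (greek labels) $\times$ (arrow tree) is doubtful as stated: Property \emph{(iv)} of Definition \ref{def:forest} ties the number of incoming arrows at a vertex to its own loop count minus the number of loops elsewhere carrying its label, so the admissible arrow systems do not decouple from the loop placement counted in your step (i). This is precisely why the paper runs the inversion on the whole system at once, after first passing to the relabeled family $\mathcal{G}({\bf A})$ of Lemma \ref{lem: f2g} --- which is also where your factor $(n-p-q-2r)!/(p'!q'!)$ actually originates.

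For Equation \ref{eq:nodegrees} your route (sum $F({\bf A})$ over all admissible ${\bf A}$, resumming $\sum_j\binom{m}{j,j}t^j$ into square-root series to produce the coefficients $\binom{-(p+a)/2}{r}$) is plausible and genuinely different from the paper, which instead reruns the Lagrange inversion from scratch on thornless forests with aggregated generating functions such as $W=\frac{x}{1-B}\bigl(1-\frac{2(B'+A_w)}{(1-B)^2}\bigr)^{-1/2}$. But the same gap recurs there: you would have to carry the ${\bf A}$-dependent factor $\mathcal{I}({\bf A})$ through the summation, and you give no argument that this produces $\alpha_{r,p,q,p',q'}$. So the proposal has a genuine gap at its central step: the exact evaluation yielding $\mathcal{I}({\bf A})$, which is the heart of the lemma, is named but not performed.
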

\begin{proof}
The proof is postponed to Annex~\ref{sec: ann}.
\end{proof}
\noindent {\bf Reformulation of the main theorem}\\
In order to show Theorem \ref{thm:main} the next sections are dedicated to the proof of the following stronger result:
\begin{theorem}
\label{thm:ref}
There is a bijection $\Theta_{\bf A}:\mathcal{LP}({\bf A}) \to \mathcal{F}({\bf A}$) and $LP({\bf A}) = F({\bf A})$.
\end{theorem}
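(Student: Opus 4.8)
The plan is to construct the bijection $\Theta_{\bf A}$ explicitly and to exhibit an explicit inverse, so that the equality $LP({\bf A})=F({\bf A})$ follows immediately once both maps are shown to be well defined and mutually inverse. The starting observation is that the degree arrays have been defined so that the two sides already match at the level of statistics: a block of $\pi_1$ of half size $i$ should become a white vertex of degree $i$, a block of $\pi_2$ of half size $i$ a black vertex of degree $i$, the $j$ internal non-hat/non-hat (resp. hat/hat) pairs $\{t,f_3(t)\}$ counted in the arrays should become the $j$ loops at that vertex, and the dichotomy ``maximum non-hat (resp. hat) number paired by $f_3$ to a hat versus a non-hat number'' should record whether the vertex is a non-seed root or an ordinary non-root vertex. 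Thus the correspondence between the entries of $(P,P',Q,Q')$ and the vertex-counting conditions (vii)--(viii) of Definition~\ref{def:forest} is forced; the real content of the theorem is that this local dictionary can be globalized into a genuine bijection respecting the connectivity, the ordering, and the labelling data.

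For the forward map I would proceed as follows. Fixing the canonical pairings $f_1$ and $f_2$, I read the half-edges of each block in the cyclic order in which they are visited while moving around the corresponding white vertex (via the cycles of $f_3\circ f_1$) or black vertex (via the cycles of $f_3\circ f_2$); this supplies the ordered set of descendants required by Definition~\ref{def:forest}. Each pair $\{t,f_3(t)\}$ is then classified: a pair whose two elements lie in one block and share the same hat status is drawn as a loop at the associated vertex; a pair joining a $\pi_1$-block to a $\pi_2$-block becomes either a tree-edge or, in the non-orientable situations signalled by the hat/non-hat matching of the maximal element, the distinguished maximal loop of a non-seed root together with its outgoing arrow; the remaining half-edges become thorns. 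The \emph{latin} labels are assigned to encode the induced matching between white and black thorns, and the \emph{greek} labels to encode the association of each non-maximal loop with a vertex of the opposite colour. The seed tree is rooted at the block containing the root half-edge $1$.

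The crucial verification---and the step I expect to be the main obstacle---is that the object produced is genuinely a forest of the required type, i.e. that the ``ascendant/descendant structure defined by the edges and the arrows is a tree structure rooted in the root of the seed tree.'' This is precisely where the unicellularity hypothesis $\nu=(n)$ enters: since $f_1\circ f_2\in\mathcal{C}_{(n)(n)}$ is a single pair of $n$-cycles, the face traversal starting at the root half-edge visits every half-edge, and one must show that following tree-edges toward the seed and arrows toward their targets neither creates a cycle nor leaves any vertex unreached. I would establish this by an induction that peels off the last vertex encountered in the face traversal, using the maximality conditions on the $f_3$-images of the largest hat and non-hat numbers of each block to guarantee that every non-seed component attaches to the already-constructed part through exactly one edge or one arrow. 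The same maximality bookkeeping is what makes the root/non-root dichotomy ($P$ versus $P'$ and $Q$ versus $Q'$) consistent between the two sides.

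Finally I would build the inverse $\Theta_{\bf A}^{-1}$ by reversing each step: from a forest, the vertices together with their ordered descendants recover the blocks of $\pi_1$ and $\pi_2$ and their internal cyclic orders, loops recover the same-type $f_3$-pairs, tree-edges and arrows recover the cross pairs, and the latin and greek labels recover the remaining values of $f_3$ on the thorns and the non-maximal loops; the tree structure guarantees that there is a unique way to reassemble a pairing on $[n]\cup[\widehat{n}]$ compatible with $f_1$ and $f_2$. After checking that $\Theta_{\bf A}$ and $\Theta_{\bf A}^{-1}$ are mutually inverse by a step-by-step comparison on each of the three data types (the pairing, and the two set partitions), we conclude that $\Theta_{\bf A}$ is a bijection and hence $LP({\bf A})=F({\bf A})$, which combined with Lemma~\ref{lem: lag} yields the reformulation underlying Theorem~\ref{thm:main}.
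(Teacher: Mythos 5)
Your outline has the same overall shape as the paper's argument (explicit forward map, verification of the tree structure, explicit inverse), but two of its steps conceal genuine gaps. First, you order the descendants of each vertex by the cyclic order coming from $f_3\circ f_1$ (resp.\ $f_3\circ f_2$). The paper instead orders the half-edges of a block by \emph{increasing integer label}, clockwise. This is not a cosmetic choice: the finished forest carries no integer labels, only latin/greek symbols, so the ordering convention must be one from which the integer labels can be canonically re-derived. With the increasing-label convention, the recovery rule is simply ``$\widehat{i}$ is the leftmost descendant of the relevant black vertex whose label has not yet been recovered'' (using that blocks of $\pi_2$ are stable under $f_2$ and $f_3$), and symmetrically $i+1=f_1(\widehat i)$ on the white side. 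With your $f_3\circ f_1$-cyclic ordering there is no distinguished starting point in the cycle and no analogous leftmost-unrecovered rule, so injectivity is in doubt as stated.

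Second, and more importantly, the sentence ``the tree structure guarantees that there is a unique way to reassemble a pairing on $[n]\cup[\widehat n]$ compatible with $f_1$ and $f_2$'' is an assertion of precisely the theorem's hard content, not a proof of it. The paper devotes its injectivity and surjectivity sections to this: an explicit algorithm that recovers $1,\widehat1,2,\widehat2,\dots$ in order (four cases at each step, according to whether the current descendant is a thorn, a greek-labelled half-loop, an unlabelled half-loop, or an edge), followed by a counting argument showing the algorithm never stalls before all $2n$ labels are placed --- each vertex is visited exactly its degree many times, which uses the property that the number of loops at a vertex equals its number of incoming arrows plus the number of loops bearing its greek label (the paper's Lemma on loops), and uses the rootedness of the arrow-plus-edge structure at the seed to handle the root vertex. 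None of this is supplied by ``the tree structure'' alone, and your proposal does not verify the loop/arrow counting property of Definition~\ref{def:forest} at all. Your plan for proving the tree structure itself (peeling off vertices along the face traversal) is plausible and differs from the paper's direct comparison of maximal labels ($m^{b_1}<m^{b_2}$ via stability under $f_1$ and $f_2$), but without the label-recovery argument the bijection is not established.
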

\noindent Theorem \ref{thm:comp} is a direct consequence of the above result by setting $r=0$ and using Equation \ref{eq: or}. Using Equation \ref{eq:nodeg}, corollaries \ref{thm:cor} and \ref{thm:corcomp} are also direct consequences of Theorem \ref{thm:ref}. 

\section{Bijection between partitioned locally orientable unicellular hypermaps and permuted forests}
We proceed with the description of the bijective mapping $\Theta_{\bf A}$ between partitioned locally orientable hypermaps and permuted forests of degree ${\bf A}$. Let $(f_3,\pi_1,\pi_2)$ be a partitioned hypermap in $\mathcal{LP}({\bf A})$. The
first step is to define a set of white and black vertices with labeled ordered half edges such that:
\begin{itemize}
\item each white vertex is associated to a block of $\pi_1$ and each black vertex is associated to a block of $\pi_2$,
\item the number of half edges connected to a vertex is half the cardinality of the associated block, and
\item the half edges connected to the white (resp. black) vertices are labeled with the non hat (resp. hat) integers in the associated blocks so that moving clockwise around the vertices the integers are sorted in increasing order.
\end{itemize}
Then we define an ascendant/descendant structure on the vertices. A black vertex $b$ is the descendant of a white one $w$ if the maximum half edge label of $b$ belongs to the block of $\pi_1$ associated to $w$. Similar rules apply to define the ascendant of each white vertex except the one containing the half edge label $1$.\\
If black vertex $b^d$ (resp. white vertex $w^d$) is a descendant of white vertex $w^a$ (resp. black vertex $b^a$) and has maximum half edge label $m$ such that $f_3(m)$ is the label of a half edge of $w^a$ (resp. $b^a$), i.e. $f_3(m^b)$ is a non hat (resp. hat) number, then we connect these two half edges to form an edge. Otherwise $f_3(m)$ is a hat (resp. non hat) number and we draw an arrow (\begin{tikzpicture} \draw[very thick,densely dashed] [->] (0,0) to (0.8,0);   \end{tikzpicture}) between the two vertices. Note that descending edges are ordered but arrows are not.\\ 
\begin{lemma} \label{lem:tree}The above construction defines a tree structure rooted in the white vertex with half edge $1$. \end{lemma}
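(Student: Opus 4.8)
The plan is to show that the parent assignment implicit in the construction — sending each vertex to its unique ascendant — turns the vertex set into a functional graph with a single sink, and then to exhibit a strict monovariant that forbids cycles. First I would record that each non-root vertex receives exactly one ascendant: for a black vertex $b$ its maximal (hat) half-edge label lies in precisely one block of $\pi_1$ (since $\pi_1$ is a set partition of $[n]\cup[\widehat n]$), and that block is the white ascendant; symmetrically, for a white vertex $w$ other than the root its maximal (non-hat) label lies in a unique block of $\pi_2$, giving the black ascendant. The white vertex whose block contains $1$ is the only vertex with no ascendant. Hence each non-root vertex has out-degree one and the root has out-degree zero, so it suffices to prove that following ascendants from any vertex never cycles; acyclicity together with finiteness then forces every chain to terminate at the unique sink, which is the root, and the parent relation is a tree.

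The core step is the monovariant. For a white vertex $w$ write $M(w)$ for the largest non-hat label among its half-edges. I claim that along the ascendant chain, passing from one white vertex to the next white vertex two steps up strictly increases $M$, unless that next white vertex is the root. Indeed, let $m = M(w)$ and let $b$ be the black ascendant of $w$, so $m$ lies in the $\pi_2$-block of $b$. Since this block is stable under $f_2 = f_{\star}$ and $f_{\star}(m) = \widehat m$, the label $\widehat m$ also lies in the block, so $\widehat m$ is a half-edge of $b$ and the largest hat label $\widehat M$ of $b$ satisfies $M \geq m$. Now let $w'$ be the white ascendant of $b$, so $\widehat M$ lies in the $\pi_1$-block of $w'$. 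This block is stable under $f_1$, and $f_1(\widehat M) = M+1$ when $M \leq n-1$, while $f_1(\widehat n) = 1$; in either case a non-hat label, namely $M+1$ respectively $1$, belongs to the block of $w'$. Thus $M(w') \geq M+1 > m = M(w)$ whenever $M \leq n-1$, whereas the case $M = n$ forces $1$ into the block of $w'$, i.e. $w'$ is the root.

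With the monovariant in hand I would finish as follows. Any directed cycle of the ascendant relation avoids the root (the root has no ascendant), so every white-to-black-to-white passage inside a putative cycle falls under the strict case $M \leq n-1$; this makes $M$ strictly increase around the cycle, an impossibility, so there are no cycles. Consequently, iterating the ascendant map from any vertex strictly increases $M$ at each white vertex visited and therefore terminates after finitely many steps at the unique vertex without an ascendant, the white root. Since every non-root vertex has exactly one parent, the vertex set carries exactly $\#\text{vertices}-1$ parent-links, is connected to the root, and is acyclic, which is precisely a tree rooted at the white vertex carrying half-edge $1$. The main obstacle is isolating the right monovariant and checking the wrap-around $f_1(\widehat n)=1$: this is exactly the mechanism that channels every chain into the root rather than merely bounding the labels, and one must verify that the two stability properties — under $f_{\star}$ for $\pi_2$ and under $f_1$ for $\pi_1$ — combine to give a genuinely strict increase at each interior step.
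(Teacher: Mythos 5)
Your proof is correct and follows essentially the same route as the paper's: both arguments establish that the maximal half-edge label strictly increases along the ascendant chain (using stability of the $\pi_2$-blocks under $f_2=f_\star$ and of the $\pi_1$-blocks under $f_1$), and both handle the wrap-around $f_1(\widehat{n})=1$ by observing that it forces the chain into the white vertex containing the label $1$. The only cosmetic difference is that you anchor the monovariant at white vertices two steps apart while the paper compares the descendant and ascendant black vertices of a common white vertex; the content is identical.
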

\begin{proof}
Let black vertices $b_1$ and $b_2$ associated to blocks $\pi_2^{b_1}$ and $\pi_2^{b_2}$ be respectively a descendant and the ascendant of white vertex $w$ associated to $\pi_1^w$. We denote by $m^{b_1}$, $m^{b_2}$ and $m^{w}$ their respective maximum half edge labels (hat, hat, and non hat) and assume $m^{b_1} \neq \widehat{n}$. As $\pi_1^{w}$ is stable by $f_1$, then $f_1(m^{b_1})$ is a non hat number in $\pi_1^w$ not equal to $1$. It follows that $m^{b_1} < f_1(m^{b_1}) \leq m^w < f_2(m^w)$. Then as  $\pi_2^{b_2}$ is stable by $f_2$, it contains $f_2(m^w)$ and $f_2(m^w) \leq m^{b_2}$. Putting everything together yields $m^{b_1} < m^{b_2}$. In a similar fashion, assume white vertices $w_1$ and $w_2$ are descendant and ascendant of black vertex $b$. If we note $m^{w_1}$, $m^{w_2}$ and $m^{b}$ their maximum half edge labels (non hat, non hat, and hat) with $m^{b} \neq \widehat{n}$, one can show that $m^{w_1} < m^{w_2}$. Finally, as $f_1(\widehat{n}) =1$, the black vertex with maximum half edge $\widehat{n}$ is descendant of the white vertex containing the half edge label $1$. \end{proof}
\begin{example}
Using the hypermap of Figure \ref{fig:example} we get the set of vertices and ascendant/descendant structure as described on Figure \ref{fig:verttree}.
\begin{figure}[htbp]
  \begin{center}
    \includegraphics[width=0.8\textwidth]{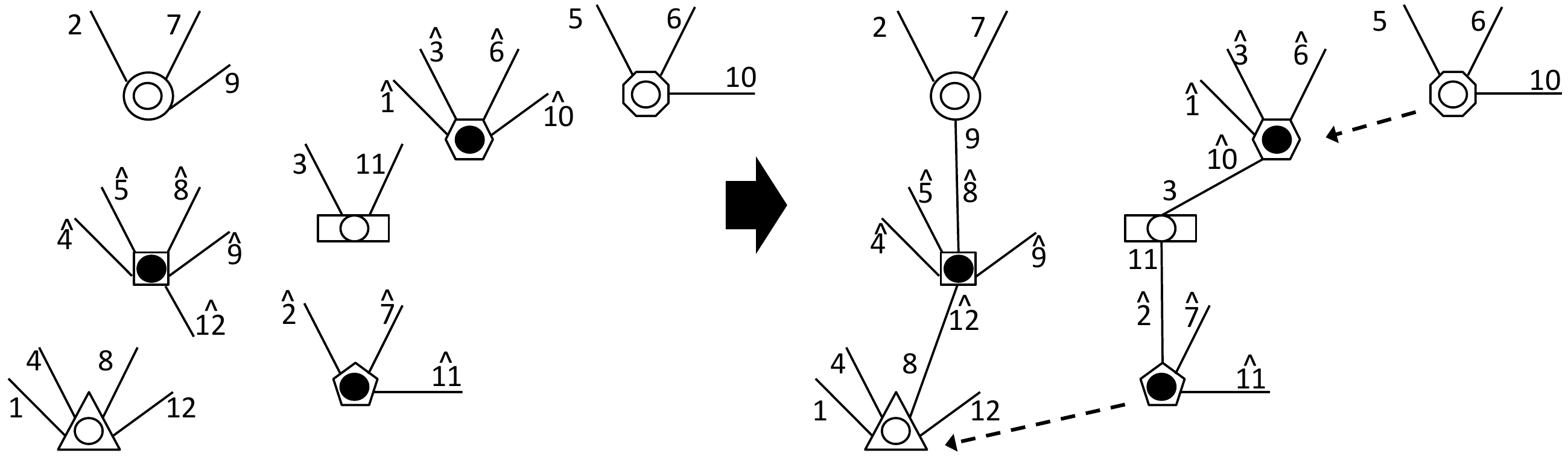}
    \caption{Construction of the ascendant/descendant structure}
    \label{fig:verttree}
  \end{center}
\end{figure}
\end{example}
Next we proceed by linking half edges connected to the same vertex if their labels are paired by $f_3$ to form loops.
Furthermore, we assign greek symbolic labels from $\{\alpha,\beta,\ldots\}$ to all the non maximal loops and the applicable vertices in the following way: 
\begin{itemize}
\item if $i$ and $f_3(i)$ are the numeric labels of a non maximal loop connected to a white (resp. black) vertex, we assign the same label to the loop and the black (resp. white) vertex associated to the block of $\pi_2$ (resp. $\pi_1$) also containing $i$ and $f_3(i)$, 
\item a vertex has at most one such label.
\end{itemize}
As a natural consequence of these two conditions, several loops may share the same label. 
\begin{lemma}\label{lem:loops} The number of loops connected to the vertex labeled $\alpha$ is equal to its number of incoming arrows plus the number of loops  labeled $\alpha$ incident to other vertices in the forest.
 \end{lemma}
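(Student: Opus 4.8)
The plan is to translate the statement into a purely arithmetic identity about the block of $\pi_2$ (or $\pi_1$) attached to the vertex carrying the label $\alpha$, and then to read off both sides of the claimed equality directly from the way $f_3$ pairs the hat and non-hat numbers inside that block.

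First I would fix the vertex $v_\alpha$ carrying the label $\alpha$; by symmetry assume it is black, so that it corresponds to a block $B$ of $\pi_2$, of half-size $i$. By the lemma asserting that every block contains as many hat as non-hat numbers, $B$ has exactly $i$ hat and $i$ non-hat elements, and since $\pi_2$ is stable under $f_3$ the restriction of $f_3$ to $B$ is an honest pairing of $B$. I would then sort the $f_3$-pairs lying inside $B$ into three classes according to the type of their endpoints, (hat, hat), (non-hat, non-hat) and mixed, with respective cardinalities $j$, $k$ and $\ell$. Counting the hat and the non-hat elements of $B$ separately gives $2j+\ell = i = 2k+\ell$, whence $j=k$: inside $B$ the number of purely-hat pairs equals the number of purely-non-hat pairs. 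This balance is the algebraic heart of the lemma.

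Next I would identify each side of the asserted identity with one of these pair-counts. The loops incident to $v_\alpha$ are exactly the pairs $\{\widehat{s}, f_3(\widehat{s})\}$ of half edges of $v_\alpha$, i.e. the (hat, hat) pairs of $B$, so their number is $j$. On the other side, a (non-hat, non-hat) pair $\{t, f_3(t)\}$ of $B$ consists of two non-hat numbers which, $\pi_1$ being stable under $f_3$, lie in a common block of $\pi_1$ and hence form a loop on a single white vertex; moreover, since the numeric labels $t, f_3(t)$ belong to $B$, that loop is associated precisely with $v_\alpha$. Conversely, every loop labelled $\alpha$ and every arrow pointing into $v_\alpha$ arises from such a pair: by the construction a loop on a white vertex is replaced by an arrow into its associated vertex exactly when it is the maximal loop of a non-seed root, and it is labelled by its associated vertex in every other case. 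Thus the $k$ pairs of type (non-hat, non-hat) in $B$ split, with neither overlap nor omission, into the arrows incoming to $v_\alpha$ and the loops labelled $\alpha$; the latter all sit on white vertices and hence on vertices other than $v_\alpha$.

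Combining the two identifications with $j=k$ gives (loops on $v_\alpha$) $= j = k =$ (arrows into $v_\alpha$) $+$ (loops labelled $\alpha$ on other vertices), which is the claim, and the case of a white $v_\alpha$ follows verbatim after exchanging the roles of hat and non-hat numbers and of $\pi_1$ and $\pi_2$. I expect the only genuinely delicate step to be this last bookkeeping: verifying that the arrow-versus-label dichotomy on the white-vertex loops associated with $v_\alpha$ matches the split of the (non-hat, non-hat) pairs of $B$ exactly, and in particular checking that the maximal loop of the seed tree's root, which is never converted into an arrow, is correctly counted on the labelled side. Everything else reduces to the elementary count yielding $j=k$.
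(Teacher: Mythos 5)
Your proof is correct and takes essentially the same route as the paper, which disposes of the lemma in one line by invoking the equality of the numbers of hat/hat and non-hat/non-hat pairs within each block: your count $2j+\ell=i=2k+\ell$ is exactly that fact, and your identification of the non-hat/non-hat pairs of $B$ with (arrows into $v_\alpha$) plus (loops labelled $\alpha$ on white vertices) is the intended, unwritten second half of the argument. Your explicit check that the maximal loop of the seed tree's root, which never becomes an arrow, must be counted on the labelled side is a detail the paper leaves implicit, and you resolve it in the way consistent with Definition \ref{def:forest}.
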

\begin{proof} The result is a direct consequence of the fact that in each block the number of hat/hat pairs is equal to the number of non hat/non hat pairs.\end{proof} 

As a final step we define a bijection between the remaining half edges (thorns) connected to the white vertices and the ones connected to the black vertices. If two remaining thorns are paired by $f_3$ then these two thorns are given the same label from $\{a,b,\ldots\}$.  Then all the original integer labels are removed.
Denote by $\widetilde{F}$ the resulting forest.
\begin{example}
We continue with the hypermap from Figure \ref{fig:example} and perform the final steps of the construction as described on Figure \ref{fig:loopsperm} (note that the geometric shapes are here for reference only, they do not play any role in the final object $\widetilde{F}$).
\begin{figure}[htbp]
  \begin{center}
    \includegraphics[width=0.8\textwidth]{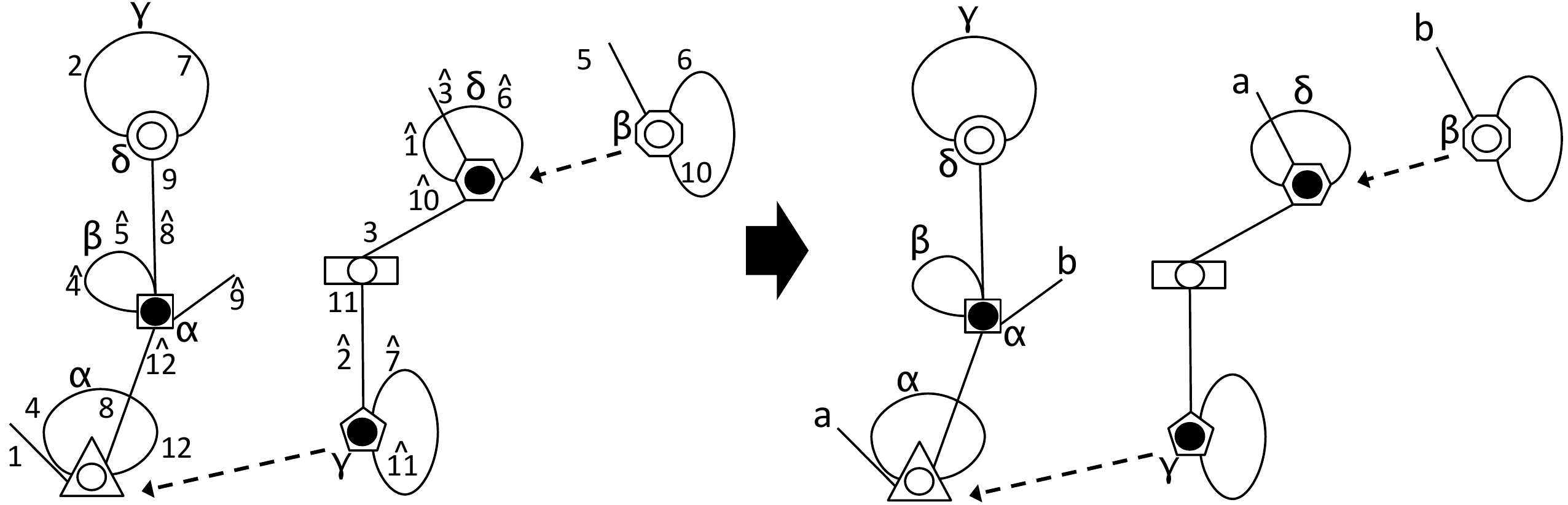}
    \caption{Final steps of the permuted forest construction}
    \label{fig:loopsperm}
  \end{center}
\end{figure}
\end{example}

\noindent As a direct consequence of definition \ref{def:forest}, $\widetilde{F}$ belongs to $\mathcal{F}(A)$.

\section{Proof of the bijection}
We show that mapping $\Theta_{\bf A}:(f_3,\pi_1,\pi_2) \mapsto \widetilde{F}$ is indeed one-to-one.
\subsection{Injectivity}
We start with a forest $\widetilde{F}$ in $\mathcal{F}({\bf A})$ and show that there is at most one triple $(f_3,\pi_1,\pi_2)$ in $\mathcal{LP}({\bf A})$ such that $\Theta_{\bf A}(f_3,\pi_1,\pi_2) = \widetilde{F}$. The first part is to notice that within the construction in $\Theta_{\bf A}$ the original integer label of the leftmost descendant (thorn, half loop or edge) of the root vertex of the seed tree is necessarily $1$ (this root is the vertex containing $1$ and the labels are sorted in increasing order from left to right).\\
Assume we have recovered the positions of integer labels $1,\widehat{1},2,\widehat{2},\ldots,i$, for some $1 \leq i \leq n-1$, non hat number. Then four cases can occur:
\begin{itemize}
\item $i$ is the integer label of a thorn of latin label $a$. In this case $f_3(i)$ is necessarily the integer label of the thorn connected to a black vertex also labeled with $a$. But as the blocks of $\pi_2$ are stable by both $f_3$ and $f_2$ then $\widehat{i} = f_2(i)$ is the integer label of one of the descendants of the black vertex with thorn $a$. As these labels are sorted in increasing order, necessarily, $\widehat{i}$ labels the leftmost descendant with no recovered integer label,
\item $i$ is the integer label of a half loop of greek label $\alpha$. Then, in a similar fashion as above $\widehat{i}$ is necessarily the leftmost unrecovered integer label of the black vertex with symbolic label $\alpha$,
\item $i$ is the integer label of a half loop with no symbolic label (i.e, either $i$ or $f_3(i)$ is the maximum label of the considered white vertex). Then, $\widehat{i}$ is necessarily the leftmost unrecovered integer label of the black vertex at the other extremity of the arrow outgoing from the white vertex containing integer label $i$,
\item $i$ is the integer label of an edge and $\widehat{i}$ is necessarily the leftmost unrecovered integer label of the black vertex at the other extremity of this edge.
\end{itemize}
Finally, using similar four cases for the black vertex containing the descendant with integer label $\widehat{i}$ and the fact that blocks of $\pi_1$ are stable by $f_3$ and $f_1$, the thorn, half loop or edge with integer label $i+1 = f_1(\widehat{i})$ is uniquely determined as well.

We continue with the procedure described above until we fully recover all the original labels $[n]\cup [\hat{n}]$. According to the construction of $\widetilde{F}$ the knowledge of all the integer labels uniquely determines the blocks of $\pi_1$ and $\pi_2$. The pairing $f_3$ is uniquely determined by the loops, edges and thorns with same latin labels as well.

\begin{example}
Assume the permuted forest $\widetilde{F}$ is the one on the right hand side of Figure \ref{fig:exrecons}. The steps of the reconstruction are summarized in Figure \ref{fig:recons}. We get that the unique triple $(f_3,\pi_1,\pi_2)$ such that $\Theta_{\bf A}(f_3,\pi_1,\pi_2) = \widetilde{F}$ is:
\begin{eqnarray}
\nonumber f_3 &=& (1\,\, 4)(\widehat{1}\,\, \widehat{8})(2\,\, 9)(\widehat{2}\,\, \widehat{3})(3\,\, \widehat{11})(\widehat{4}\,\, \widehat{10})(5\,\, 7)(\widehat{5}\,\, 6)(\widehat{6}\,\, 11)(\widehat{7}\,\, \widehat{9})(8\,\, 10)\\
\nonumber \pi_1 &=& \{\{\widehat{11},1,\widehat{1},2,\widehat{2},3,\widehat{3},4,\widehat{7},8,\widehat{8},9,\widehat{9},10\};\{\widehat{4},5,\widehat{5},6,\widehat{6},7,\widehat{10},11\}\}\\
\nonumber \pi_2 &=& \{\{2,\widehat{2},3,\widehat{3},5,\widehat{5},6,\widehat{6},7,\widehat{7},9,\widehat{9},11,\widehat{11}\};\{1,\widehat{1},4,\widehat{4},8,\widehat{8},10,\widehat{10}\}\}
\end{eqnarray}
\begin{figure}[htbp]
  \begin{center}
    \includegraphics[width=1\textwidth]{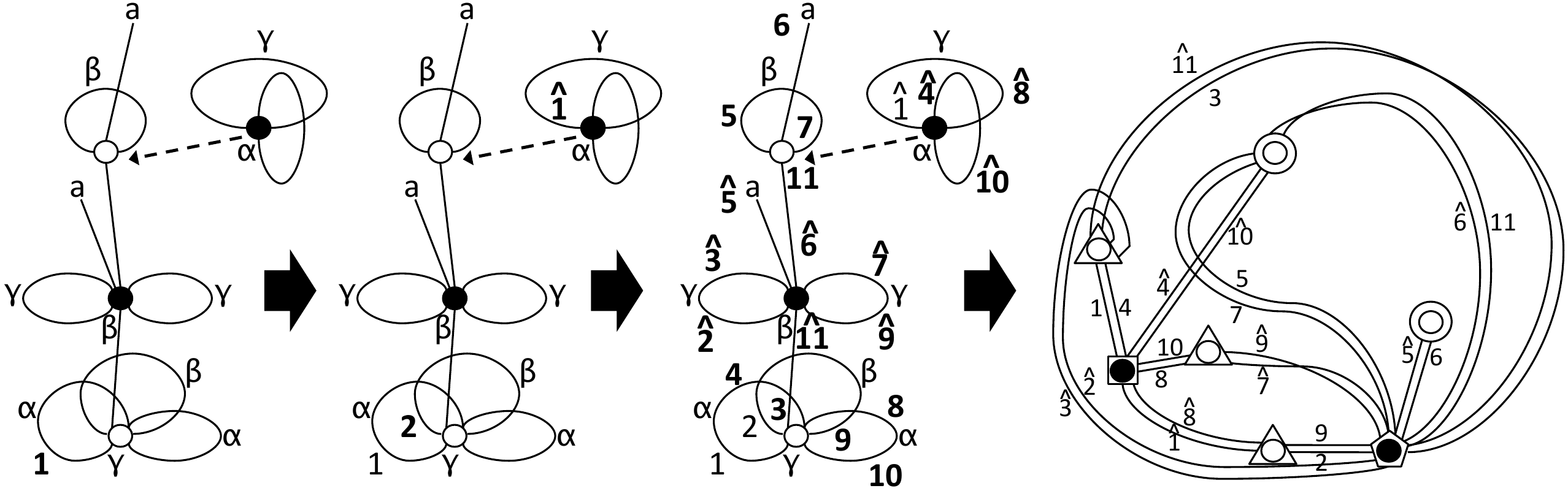}
    \caption{Recovery of the integer labels and the partitioned map}
    \label{fig:recons}
  \end{center}
\end{figure}
\end{example}

\subsection{Surjectivity}
To prove that $\Theta_{\bf A}$ is surjective, we have to show that the reconstruction procedure of the previous section always finishes with a valid output.\\ 
Assume the procedure comes to an end at step $i$ before all the integer labels are recovered (where $i$ is for example non hat, the hat case having a similar proof). It means that prior to this step we have already recovered all the labels of vertex $v^i$ identified as the one containing $\widehat{i}$ (or $i+1$). This is impossible by construction provided $v^i$ is not the root vertex of the seed tree. Indeed the number of times a vertex is identified for the next step is equal to its number of thorns plus its number of edges plus twice the number of loops that have the same greek label as $v^i$ plus twice the incoming arrows. Using Property {\em (iv)} of Definition \ref{def:forest}, we have that the sum of the two latter numbers is twice the number of loops of $v^i$. As a consequence, the total number of times the recovering process goes through $v^i$ is exactly (and thus never more than) the degree of $v^i$.

If $v$ is the root vertex of the seed tree the situation is slightly different due to the fact that we recover label $1$ before we start the procedure. To ensure that the procedure does not terminate prior to its end, we need to show that the $\mid v \mid$-th time the procedure goes through the root vertex is right after all the labels of the forest have been recovered. Again, this is always true because:
\begin{itemize}
\item the last element of a vertex to be recovered is the label of the maximum element of the associated block. Consequently, all the elements of a vertex are recovered only when all the elements of the descending vertices (through both arrows and edges) are recovered.
\item property {\em (v)} of Definition \ref{def:forest} states that the ascendant/descendant structure involving both edges and arrows is a tree rooted in $v$. As a result, the procedure goes the $v$-th time through $v$ only when all the elements of all the other vertices are recovered.
\end{itemize} 


\section{Additional results} 
The bijection proved in the previous sections may be used directly to derive efficiently some additional results that may not be obvious from the formula in Theorem \ref{thm:main}.
\subsection{Coefficient of $m_\la(X)m_n(Y)$}

Using the bijection between partitioned locally orientable hypermaps and permuted forests, one can show:
\begin{theorem} For $\la \vdash n$ the coefficient of $m_\la(X)m_n(Y)$ in the monomial expansion of $P^{\mathbb{R}}_n(X,Y)$ is given by
\begin{equation}
\label{eq: mlamn}
[m_\la(X)m_n(Y)]P^{\mathbb{R}}_n(X,Y) =  \binom{n}{\la}(2\la-1)!!,
\end{equation}
where $(2\la-1)!! = \prod_i(2\la_i-1)!!$ and $(2\la_i-1)!! = (2\la_i-1)(2\la_i-3)\dots1$.
\end{theorem}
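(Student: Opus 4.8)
The plan is to specialize the bijection $\Theta_{\bf A}$ of Theorem \ref{thm:ref} to the case where the black vertex distribution is $\mu=(n)$, so that $\pi_2$ has a single block of half size $n$. Since $m_\mu(Y)=m_n(Y)$ forces $\ell(\mu)=1$, the combinatorial picture collapses dramatically: there is exactly one black vertex, hence exactly one black tree, and the seed tree together with all non-seed trees must attach to this single black vertex. First I would use Equation \ref{lem:lp} together with Equation \ref{eq:lpa} to write the desired coefficient as $Aut_\la\,Aut_n\,LP^n_{\la,(n),\cdot}=Aut_\la\sum_{r,{\bf A}}LP({\bf A})$, where the sum runs over arrays ${\bf A}$ compatible with white half type $\la$ and black half type $(n)$ (note $Aut_{(n)}=1$). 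By Theorem \ref{thm:ref} this equals $Aut_\la\sum_{r,{\bf A}}F({\bf A})$, so it suffices to enumerate the relevant forests.

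Next I would analyze which forests $\mathcal{F}({\bf A})$ survive. With a single black vertex $v$ of degree $n$, there are no non-seed trees with a black root (so $Q'$ contributes only the lone block), and every white vertex must be a child of $v$ or connected through it; in particular the arrow/loop structure feeding greek labels is severely constrained. The key simplification is that, because the unique black vertex absorbs all $n$ black half edges, the loop count on the black side is forced, and the thorn bijection together with the loop-labelling of Definition \ref{def:forest} becomes essentially free once the white side is fixed. I expect that after imposing $\mu=(n)$ the weighted count $\sum_{r,{\bf A}}F({\bf A})$ telescopes so that the formula of Lemma \ref{lem: lag} reduces to a product over the white vertices of degrees $\la_1,\dots,\la_{\ell(\la)}$, each contributing a double factorial $(2\la_i-1)!!$, while the multinomial $\binom{n}{\la}$ records the number of ways of distributing and ordering the $n$ half edges among these white vertices (equivalently, the $Aut_\la$ factor cancels the overcounting of equal parts inherent in $m_\la$).

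The cleanest route to the double factorial is a direct combinatorial argument rather than a brute-force specialization of $\mathcal{I}({\bf A})$. Concretely, fixing $f_2=f_\star$ and the single black block, the number of pairings $f_3$ on $[n]\cup[\widehat n]$ with $f_3\circ f_1\in\mathcal{C}_{\la\la}$ and $f_3\circ f_2\in\mathcal{C}_{(n)(n)}$ can be counted by building the white vertices one at a time: a white vertex of degree $\la_i$ corresponds to choosing a fixed-point-free involution-type pairing on its $2\la_i$ half edges compatible with the single-face and single-black-vertex conditions, and the number of such local configurations is exactly $(2\la_i-1)!!$, the number of perfect matchings on $2\la_i$ points. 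Multiplying over the parts and accounting for the interleaving of the parts into the global cyclic structure yields the factor $\binom{n}{\la}$. The main obstacle will be establishing rigorously that the local choices at distinct white vertices are independent and that the global single-face constraint $f_1\circ f_2\in\mathcal{C}_{(n)(n)}$ imposes no further restriction beyond what the tree structure of Lemma \ref{lem:tree} already guarantees; I would handle this by running the injectivity/surjectivity reconstruction of Section 4 in this special case and checking that each white vertex is visited independently, so that the perfect-matching count $(2\la_i-1)!!$ applies verbatim at each vertex.
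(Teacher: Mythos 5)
Your reduction of the coefficient to $Aut_\la\sum_{r,{\bf A}}F({\bf A})$ is sound, but the ``direct combinatorial argument'' you then substitute for the forest count computes the wrong quantity. You propose to count pairings $f_3$ with $f_3\circ f_1\in\mathcal{C}_{\la\la}$ and $f_3\circ f_2\in\mathcal{C}_{(n)(n)}$; that is the unpartitioned number $L^n_{\la,(n)}$, i.e.\ (up to normalization) the coefficient of $p_\la(X)p_n(Y)$, not of $m_\la(X)m_n(Y)$. The monomial coefficient is $Aut_\la\,LP^n_{\la,(n)}$, where the blocks of $\pi_1$ have half type $\la$ but $f_3\circ f_1$ may lie in $\mathcal{C}_{\la'\la'}$ for any refinement $\la'$ of $\la$, and $f_3\circ f_2$ is unconstrained. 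The factor $(2\la_i-1)!!$ is the number of \emph{all} perfect matchings on the $2\la_i$ elements of a block --- correct in the partitioned setting, but an overcount once you insist that the block be a single white vertex (a single cycle pair of $f_3\circ f_1$ of length $\la_i$). The discrepancy already shows at $n=2$, $\la=(2)$: the theorem gives $\binom{2}{2}\cdot 3!!=3$, while exactly one of the three pairings of $\{1,2,\widehat{1},\widehat{2}\}$ satisfies $f_3\circ f_1\in\mathcal{C}_{(2)(2)}$ and $f_3\circ f_2\in\mathcal{C}_{(2)(2)}$. Two smaller problems: with a single black vertex it is not true that black-rooted non-seed trees are excluded (the black vertex is attached to the seed root by an arrow, hence is itself a non-seed root, whenever $f_3(\widehat{n})$ is a hat number), and the claim that $\sum_{r,{\bf A}}F({\bf A})$ ``telescopes'' is asserted rather than proved.

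For comparison, the paper stays entirely on the forest side and works with the transposed coefficient (legitimate since $b^n_{\la,\mu}=b^n_{\mu,\la}$): it takes a single \emph{white} vertex, namely the seed root of degree $n$, and black vertices of degree distribution $\la$, so that every black vertex hangs from the root and the forest is a clean two-level object. A two-vertex forest with both degrees $k$ is nothing but a perfect matching of its $2k$ descendants, giving $F_k=(2k-1)!!$, and a forest with black degree distribution $\la$ splits bijectively into an $\ell(\la)$-tuple of such two-vertex forests at the cost of the factors $Aut_\la$ and $\binom{n}{\la}$, whence $Aut_\la F_\la=\binom{n}{\la}(2\la-1)!!$. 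If you insist on your orientation (single black vertex of degree $n$), you must count forest vertices, i.e.\ blocks of $\pi_1$, rather than cycles of $f_3\circ f_1$, and you must handle the three-level structure seed root $\to$ black vertex $\to$ remaining white vertices, which is strictly more work than the paper's decomposition.
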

\begin{proof}
Let $F_n$ be the number of forests composed of exactly one white (the root of the seed tree) and one black vertex. Obviously $F_n = (2n-1)!!$ as any such forest is fully described as a pairing of the $2n$ children around the white and the black vertex (a loop is the pairing of two children of the same vertex, thorns with latin letters are pairings of one black and one white child and an edge is the pairing of the rightmost child of the black vertex to one of the child of the white root.)\\ 
\indent But a forest with one white (root) vertex and $\ell(\lambda)$ black vertices of degree distribution $\la$ ($F_\la$ denotes the number of such forests) can be seen as a $\ell(\lambda)$-tuple of forests with one white and one black vertex of degree $\{\la_i\}_{1\leq i \leq \ell(\la)}$. The $i$-th forest is composed of the $i$-th black vertex with its descendants and one new white vertex with a subset of descendants of the original one's containing:
\begin{itemize} 
\item(i) the edge linking the white vertex and the $i$-th black vertex (if any), 
\item(ii) the thorns in bijection with the thorns of the $i$-th black vertex,
\item(iii) the loops mapped to the $i$-th black vertex.
\end{itemize} 
The construction is bijective if we distinguish in the initial forest the black vertices with the same degree ($Aut_\la$ ways to do it) and we keep track in the tuple of forests the initial positions of the descendants of the white vertices within the initial forest ($\binom{n}{\la}$ possible choices). We get:
\begin{equation}
Aut_\la F_\la = \binom{n}{\la}\prod_i F_{\la_i} =\binom{n}{\la}(2\la-1)!!
\end{equation}
\begin{figure}[h]
\begin{center}
\includegraphics[width=12cm]{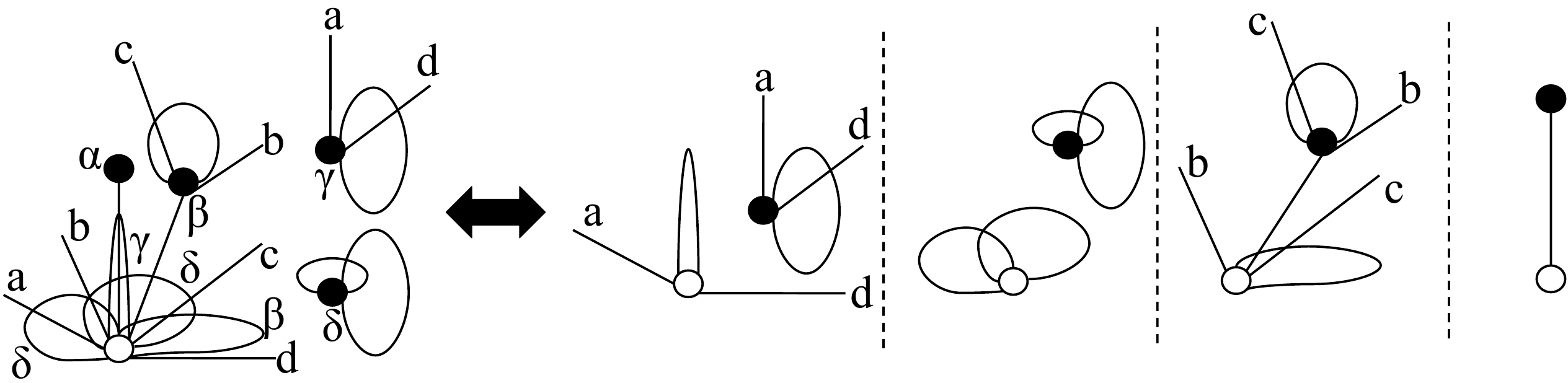}
\end{center}
\caption{Splitting a forest of black degree distribution $\la$ into a $\ell(\la)$-tuple of two vertex forests for $\la = [1^14^25^1]$.}
\label{forests}
\end{figure}
\end{proof}
\begin{rem}
Using the formula of Theorem \ref{thm:main}, we have:
\begin{equation}
\sum_{Q,Q'}\prod_{i,j}\frac{2^{Q'_{ij}-2j(Q_{ij}+Q'_{ij})}}{Q_{ij}!Q'_{ij}!}{\binom{i-1}{j,j}}^{Q_{ij}}{\binom{i-1}{j,j-1}}^{Q'_{ij}} = \frac{(2\la-1)!!}{\la!Aut_\la},
\end{equation}
where the sum runs over two dimensional arrays $Q$ and $Q'$ with $n_i(\lambda)=\sum_{j \geq 0}Q_{ij} + Q'_{ij}$.
\end{rem}
%
\subsection{Coefficient of $m_{n-a,1^{a}}(X)m_{n-a,1^{a}}(Y)$}
The number $F_{(n-a,1^a),(n-a,1^a)}$ of forests with $a+1$ white (including the root of the seed tree) and $a+1$ black vertices, both of degree distribution $(n-a,1^{a})$, can be easily obtained from the number of two-vertex forests $F_{n-2a}$. We consider $2a\leq n-1$, it is easy to show that the coefficient is equal to $0$ otherwise. Two cases occur: either the white vertex with degree $n-a$ is the root and there are $\binom{n-a}{a}\times\binom{n-a-1}{a}$ ways to add the black and the white descendants of degree $1$, or the root is a white vertex of degree $1$ and there are $\binom{n-a-1}{a-1}\times\binom{n-a-1}{a}$ ways to add the remaining white vertices and the $a$ black vertices of degree $1$ (see Figure \ref{forests2}). We have:
\begin{equation}
F_{(n-a,1^a),(n-a,1^a)} = F_{n-2a}\binom{n-a-1}{a}\left [\binom{n-a}{a}+\binom{n-a-1}{a-1}\right ]
\end{equation}
\begin{figure}[h]
\begin{center}
\includegraphics[width=8cm]{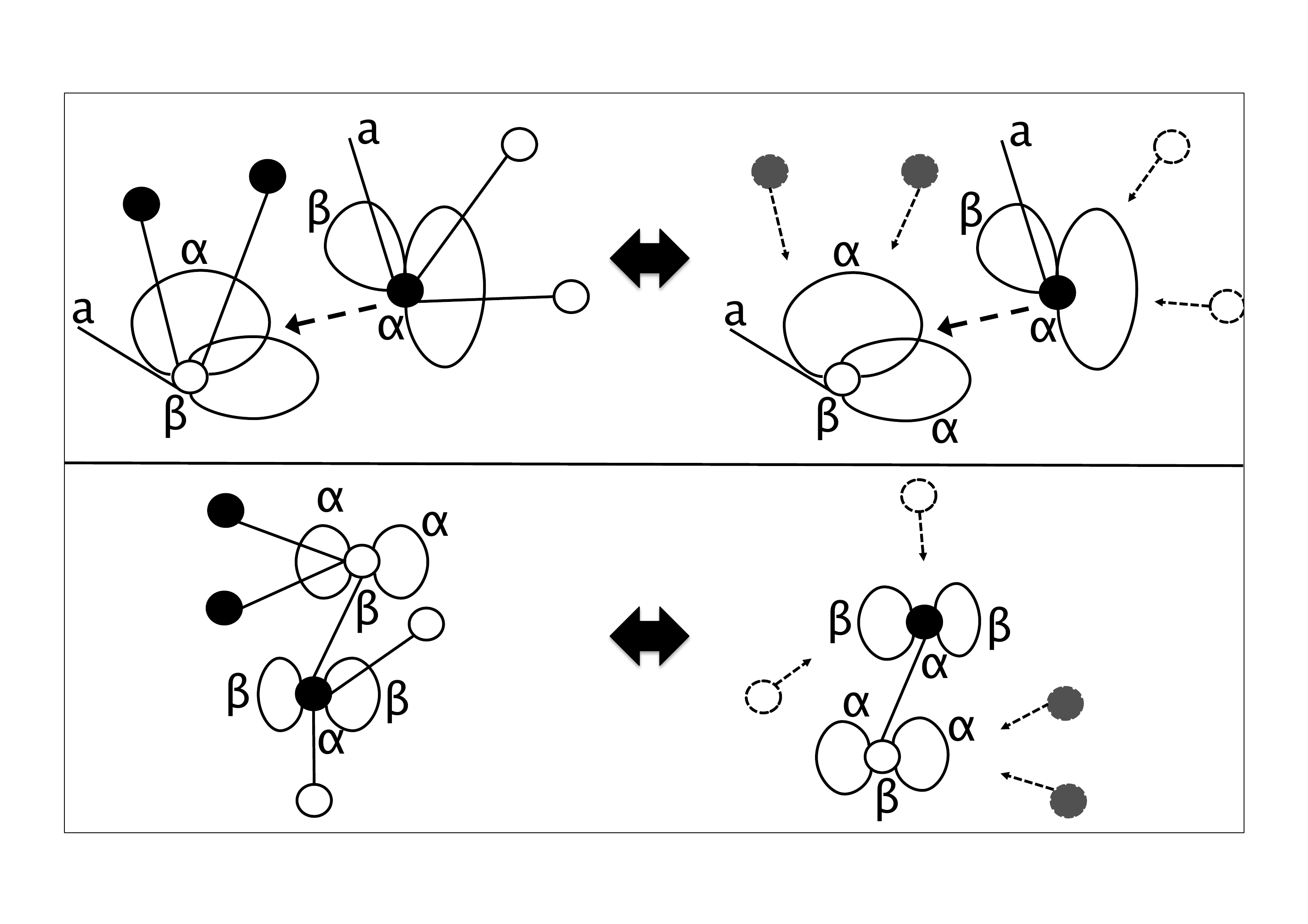}
\end{center}
\caption{Two possible decompositions of forests of degree $(n-a,1^{a})$.}
\label{forests2}
\end{figure}
As a result, we obtain :
\begin{theorem} For {\em $a$} non negative integer such that $2a\leq n-1$, the coefficient of $m_{n-a,1^{a}}(X)m_{n-a,1^{a}}(Y)$ in the monomial expansion of $P^{\mathbb{R}}_n(X,Y)$ is given by:
\begin{align}
\nonumber [m_{n-a,1^{a}}(X)m_{n-a,1^{a}}(Y)]P^{\mathbb{R}}_n(X,Y) &= Aut_{n-a,1^{a}}^2F_{(n-a,1^a),(n-a,1^a)}\\
&= n(n-2a)\left(\frac{(n-a-1)!}{(n-2a)!}\right)^2(2n-4a-1)!!
\end{align}
\end{theorem}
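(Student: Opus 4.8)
The plan is to route the computation through the bijection of Theorem~\ref{thm:ref} to turn the desired coefficient into a count of permuted forests, to evaluate that count by peeling off the degree-one vertices, and to close with an elementary binomial simplification.

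First I would record the identity behind the first displayed equality. Summing Equation~\ref{lem:lp} over $r$ and combining $L^n_{\la,\mu}=b^n_{\la,\mu}/|B_n|$ with Theorem~\ref{thm:HSS} gives the monomial expansion $P^{\mathbb{R}}_n(X,Y)=\sum_{\la,\mu\vdash n}Aut_{\la}Aut_{\mu}\,LP^n_{\la,\mu}\,m_{\la}(X)m_{\mu}(Y)$. The bijection $\Theta_{\bf A}$ of Theorem~\ref{thm:ref} then yields $LP^n_{\la,\mu}=F_{\la,\mu}$, the total number of forests with white (resp. black) degree distribution $\la$ (resp. $\mu$). Specializing to $\la=\mu=(n-a,1^a)$ and noting that in the range $2a\le n-1$ this partition has a single part equal to $n-a\ge 2$ and $a$ parts equal to $1$, so that $Aut_{n-a,1^a}=a!$, the coefficient equals $Aut_{n-a,1^a}^2\,F_{(n-a,1^a),(n-a,1^a)}=(a!)^2F_{(n-a,1^a),(n-a,1^a)}$, which is the first equality.

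I would then evaluate $F_{(n-a,1^a),(n-a,1^a)}$ by separating the degree-one vertices. The key structural observation is that a vertex of degree $1$ has a single half edge and hence carries no loop; by Definition~\ref{def:forest} it can therefore be neither a non-seed root (which must bear a loop) nor carry any descendant, so apart from the seed root every degree-one vertex is a leaf joined to its parent by one edge. Since the only white (resp. black) vertex able to carry children is the one of degree $n-a$, every degree-one black leaf hangs off the big white vertex and every degree-one white leaf off the big black vertex. The relative arrangement of the two big vertices is governed by a core two-vertex forest on $n-2a$ half edges per side, contributing $F_{n-2a}=(2n-4a-1)!!$ by the two-vertex count $F_m=(2m-1)!!$, and this core exists only when $2a\le n-1$ (otherwise the two big vertices cannot be joined, the forest is disconnected, and the count vanishes). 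It remains to reinsert the degree-one vertices, and two cases arise according to the seed root. If the big white vertex is the seed root, its $a$ degree-one black children may occupy any $a$ of its $n-a$ ordered half edges while the $a$ degree-one white children of the big black vertex must avoid its reserved maximal half edge (which points to its parent), giving $\binom{n-a}{a}\binom{n-a-1}{a}$. If instead a degree-one white vertex is the seed root---forcing the big black vertex directly above the big white vertex---then $a-1$ degree-one white leaves are inserted away from the maximal half edge of the big black vertex and $a$ degree-one black leaves away from that of the big white vertex, giving $\binom{n-a-1}{a-1}\binom{n-a-1}{a}$. Adding the two cases and factoring yields
\begin{equation*}
F_{(n-a,1^a),(n-a,1^a)} = F_{n-2a}\binom{n-a-1}{a}\left[\binom{n-a}{a}+\binom{n-a-1}{a-1}\right].
\end{equation*}

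The hard part, as usual for such bijective counts, is getting this decomposition exactly right: one must verify that the degree-one vertices are genuinely forced to be leaves of the two big vertices, that the ascendant/descendant and arrow structure of Definition~\ref{def:forest} is preserved when they are deleted and reinserted, and that the reserved-maximal-half-edge bookkeeping (the maximum label of a non-root vertex points to its parent and so cannot be overwritten by an inserted leaf) produces precisely the binomials above rather than off-by-one variants. Granting the decomposition, the rest is routine algebra: using $\binom{n-a}{a}+\binom{n-a-1}{a-1}=\dfrac{n\,(n-a-1)!}{a!\,(n-2a)!}$ and $\binom{n-a-1}{a}=\dfrac{(n-a-1)!}{a!\,(n-2a-1)!}$, together with $(n-2a-1)!\,(n-2a)!=((n-2a)!)^2/(n-2a)$ and $F_{n-2a}=(2n-4a-1)!!$, one obtains $(a!)^2F_{(n-a,1^a),(n-a,1^a)}=n(n-2a)\bigl((n-a-1)!/(n-2a)!\bigr)^2(2n-4a-1)!!$, which is the second equality.
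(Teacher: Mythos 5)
Your proposal is correct and follows essentially the same route as the paper: the coefficient is reduced to $Aut_{n-a,1^a}^2 F_{(n-a,1^a),(n-a,1^a)}$ via the forest bijection, the degree-one vertices are peeled off to leave a two-vertex core counted by $F_{n-2a}=(2n-4a-1)!!$, and the same two cases (big white vertex as seed root, or a degree-one white seed root) yield the identical factor $\binom{n-a-1}{a}\bigl[\binom{n-a}{a}+\binom{n-a-1}{a-1}\bigr]$. Your write-up is in fact somewhat more detailed than the paper's, which states the case counts without justifying the maximal-half-edge bookkeeping.
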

\section{Annex: enumeration of permuted forests} 
\label{sec: ann}
\subsection{General considerations}
\label{subsec: andeg}
In this section we prove Lemma \ref{lem: lag} and compute the cardinality of the set $\mathcal{F}({\bf A})$. To this extent we slightly modify the considered forests and define the set $\mathcal{G}({\bf A})$ of cardinality $G({\bf A})$. The definition of these forests differs from the one of $\mathcal{F}({\bf A})$ as in $\mathcal{G}({\bf A})$:
\begin{itemize}
\item[(i)] There is no bijection between the thorns connected to the black vertices and the one connected to the white vertices.
\item[(ii)]  All the non seed trees with a white (resp. black) root are labeled by an integer in $\{1,\ldots,p'\}$ (resp. $\{1,\ldots,q'\}$).
\item[(iii)]  All the loops incident to a white (resp. black) vertex whose right extremity is not the rightmost descendant of the root vertex of a non seed-tree are labeled with an integer of $\{1,\ldots, r-p'\}$ (resp. $\{1,\ldots, r-q'\}$) according to the labeling of the non seed trees. If $k_i$ $(0\leq i \leq p')$ (resp. $(1\leq i \leq q')$) is the number of such loops in white (resp . black) rooted tree $i$ (tree $0$ is the seed tree) we use integers $\sum_{j\leq i-1}k_j+1, \sum_{j\leq i-1}k_j +2,\ldots, \sum_{j\leq i-1}k_j+k_i$ to label these loops. Within a tree loops are labeled in a classical order, say according to the depth first traversal of the tree.  
\item[(iv)] Instead of the "coloration" of the loops with the greek letters, additional unordered labeled arrows are connected to the black and white vertices. The labels of the arrows connected to a given vertex $v$ are the ones of the loops colored by $v$.  
\end{itemize}
\begin{example}
The right hand side forest of Figure \ref{fig:lag} belongs to $\mathcal{G}({\bf A})$. The left hand side one is a forest of $\mathcal{F}({\bf A})$ with an equivalent coloration of the loops with the greek letters.
\begin{figure}[htbp]
  \begin{center}
    \includegraphics[width=0.7\textwidth]{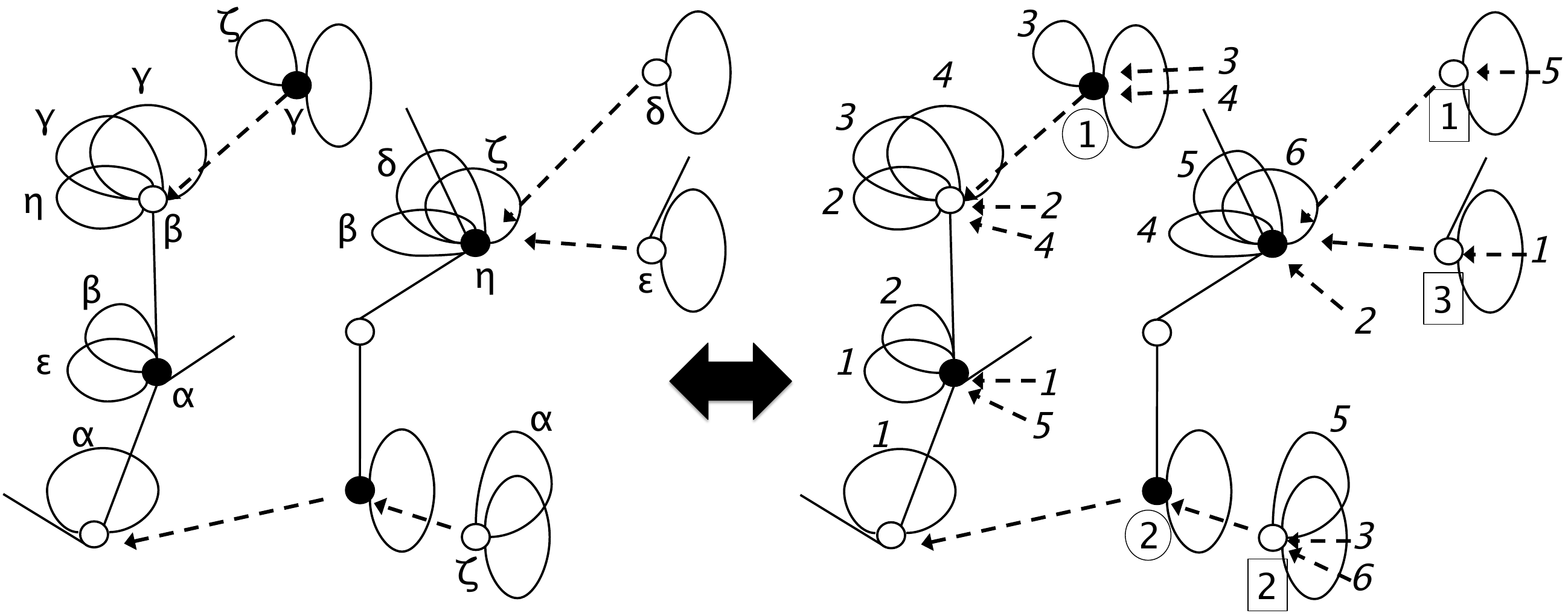}
    \caption{A forest of $\mathcal{G}({\bf A})$ (right) and its "equivalent" in $\mathcal{F}({\bf A})$ (left).}
    \label{fig:lag}
  \end{center}
\end{figure}
\end{example}
\begin{lemma}
\label{lem: f2g}
$\mathcal{G}({\bf A})$ as defined above is in a $p'!q'!$ to $(n-p-q-2r)!$ relation with $\mathcal{F}({\bf A})$: $$F({\bf A}) = \frac{(n-p-q-2r)!}{p'!q'!}G({\bf A})$$
\end{lemma}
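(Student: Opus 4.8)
The plan is to realise both $\mathcal{F}({\bf A})$ and $\mathcal{G}({\bf A})$ as fibrations over one common \emph{core} object and to compare the two fibre sizes; their ratio will be exactly $(n-p-q-2r)!/(p'!q'!)$. Before comparing the two sets I would first pin down the number of thorns carried by each colour. Since the white blocks of a hypermap in $\mathcal{LP}({\bf A})$ have half-sizes summing to $n$, the total white degree of the associated forest is $n$, and symmetrically the total black degree is $n$. By the tree property of Definition~\ref{def:forest} the edges and arrows together form a spanning tree of the $p+p'+q+q'+1$ vertices, hence they number $p+p'+q+q'$; as each of the $p'+q'$ non-seed roots contributes exactly one arrow (for its maximal loop), there are precisely $p+q$ genuine tree-edges. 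Each colour carries $r$ loops by the definition of the degree ${\bf A}$, so subtracting edges and loops from the total degree leaves $n-(p+q)-2r$ thorns on the white side and, symmetrically, the same number on the black side. Thus both sides carry exactly $T:=n-p-q-2r$ thorns.

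Next I would describe the common core: an ordered forest together with its vertices, its ordered descendants split into tree-edges, thorns and loops, its arrows for the maximal loops of non-seed roots, and the \emph{colouring} $\phi$ assigning to each non-maximal loop a vertex of the opposite colour (the greek equivalence data of Definition~\ref{def:forest}) --- but \emph{without} any latin thorn-labels and \emph{without} integer labels on the non-seed trees. There are two forgetful maps onto this core. Forgetting the latin labelling sends $\mathcal{F}({\bf A})$ onto the core, and its fibre is the set of bijections between the $T$ white thorns and the $T$ black thorns, of which there are $T!$. Forgetting the tree-labels sends $\mathcal{G}({\bf A})$ onto the core; here the fibre is the set of ways to label the $p'$ white non-seed trees by $\{1,\dots,p'\}$ and the $q'$ black ones by $\{1,\dots,q'\}$, namely $p'!\,q'!$ choices. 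The key verification is that for $\mathcal{G}$ \emph{nothing else} is free: by condition (iii) of the definition of $\mathcal{G}({\bf A})$ the loop integer-labels are determined by the tree-labels and the depth-first order, and by condition (iv) the labelled arrows at each vertex merely re-encode $\phi$ (the label set at $v$ being $\phi^{-1}(v)$ read off the now-fixed loop labels). Hence greek colouring and labelled arrows carry the same information, and the two images onto the core coincide, the structural constraints (in particular Lemma~\ref{lem:loops}) being identical in both definitions.

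Writing $C$ for the number of cores, the two fibrations give $F({\bf A})=T!\,C$ and $G({\bf A})=p'!\,q'!\,C$. Eliminating $C$ yields the claimed correspondence and the identity
\[
F({\bf A})=\frac{(n-p-q-2r)!}{p'!\,q'!}\,G({\bf A}).
\]
Equivalently, calling an $\mathcal{F}$-forest and a $\mathcal{G}$-forest \emph{related} when they share a core gives a relation in which each $\mathcal{F}$-forest meets $p'!\,q'!$ members of $\mathcal{G}({\bf A})$ and each $\mathcal{G}$-forest meets $T!=(n-p-q-2r)!$ members of $\mathcal{F}({\bf A})$, so counting incident pairs in two ways gives $p'!\,q'!\,F({\bf A})=(n-p-q-2r)!\,G({\bf A})$, as required. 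The main obstacle is not the arithmetic but the bookkeeping of the previous paragraph: one must check rigorously that, once the core is fixed, the loop labels in $\mathcal{G}$ are forced and the passage between greek labels and labelled arrows is a genuine bijection, so that the latin thorn-bijection and the tree-labelling are the \emph{only} remaining degrees of freedom and are independent of one another.
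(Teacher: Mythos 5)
Your proposal is correct and follows essentially the same route as the paper: a double count of the incidence relation in which each forest of $\mathcal{F}({\bf A})$ corresponds to the $p'!q'!$ labelings of its non-seed trees and each forest of $\mathcal{G}({\bf A})$ to the $(n-p-q-2r)!$ bijections between white and black thorns. Your preliminary computation that each colour carries exactly $n-p-q-2r$ thorns (via the spanning-tree count of edges plus arrows) is a useful supplement that the paper's sketch omits. The one point you flag but defer --- that distinct labelings of the non-seed trees always yield distinct elements of $\mathcal{G}({\bf A})$, even when some non-seed trees are isomorphic as decorated trees --- is precisely the part the paper's sketch does argue: it observes that every subforest contains exactly one more labelled arrow than labelled loops, so any set of mutually ``identical'' subforests carries an arrow whose label refers to a loop outside that set; permuting the roots' labels therefore moves that externally-indexed arrow and produces a genuinely different object of $\mathcal{G}({\bf A})$. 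Incorporating that observation would close the only step you leave open.
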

\begin{proof}(Sketch) The factor $(n+1-p-q-2r)!$ clearly comes from the fact that we removed the bijection between the thorns. There are $p'!q'!$ ways of labeling the non-seed trees of a forest in $\mathcal{F}$, i.e. there are at most $p'!q'!$ forests in $\mathcal{G}$ for $(n+1-p-q-2r)!$ forests in $\mathcal{F}$. Then two different labelings of the non seed trees yields two different forests of $\mathcal{G}$. Obviously, permuting the labels of the root tree of some subforests that are non identical up to the various labels and non linked by a dotted arrow to the same vertex automatically reaches distinct forests in $\mathcal{G}$. Next, one can notice that in any subforest there is exactly one more labeled arrow than labeled loops. As a result, a set of  "identical" subforests has labeled arrows indexed by a loop that does not belong to the considered set and permuting their root's labels yields another object of $\mathcal{G}$.
\end{proof}

\subsection{Demonstration of Equation \ref{eq:degrees}}
The computation of $G(A)$ is performed thanks to the multivariate Lagrange theorem for implicit functions. For given partitions $\lambda, \mu \vdash n$ and integer $r \geq 0$ we consider the generating function $H$:
\begin{equation}
H = x_0\sum_{{\bf A}\in M^r_{\lambda,\mu}} G({\bf A}) x^{p}y^{q}\frac{x'^{p'}}{p'!}\frac{y'^{q'}}{q'!}\frac{{f_1}^{r-q'}}{(r-q')!}\frac{{f_2}^{r-p'}}{(r-p')!}\mathbf{t}^{P}\mathbf{t'}^{P'}\mathbf{u}^{Q}\mathbf{u'}^{Q'}.
\end{equation}
Variables $x_0$, $x$, $x'$, $y$, $y'$, $f_1$ and $f_2$ mark respectively the root of the seed tree, non root white vertices, root white vertices (excluding the root of the seed tree), non root black vertices, root black vertices, labeled arrows incident to white vertices and labeled arrows incident to black vertices.  Furthermore, $\mathbf{t}$, $\mathbf{t'}$, $\mathbf{u}$, $\mathbf{u'}$ are two dimensional indeterminate such that
$$\mathbf{a}^{X} = \prod_{i,j} a_{ij}^{X_{ij}}$$
\noindent for $a \in \{t,t',u,u'\}$ and $X \in \{P,P',Q,Q'\}$.

We define $W$, $W'$, $B$ and $B'$ as the generating functions of subforests descending from an internal white, root white (excluding the root of the seed tree), internal black and root black vertex.
\begin{remark}
According to the above definition the degree of the root vertex of a subforest marked by $W$ is one plus the number of descendants while the degree of the root vertex marked by $H$ is only the number of its descendants. As a result, $W \neq H$.
\end{remark}
\noindent Additionally, we note $A_w$ and $A_b$ the generating functions of the labeled arrows incident to the white (respectively black) vertices. Trivially, $A_w = f_1$ and $A_b = f_2$. According to the construction rules of the considered permuted forest, we have the following relation between $W$ and the other considered generating functions:
\begin{equation}
W = x\sum_{i\geq 1,j,k\geq 0}t_{i,j}\binom{i-1}{2j}(1+B)^{i-1-2j}(2j-1)!!\frac{B'^k}{k!}\frac{A_w^{j-k}}{(j-k)!}
\end{equation}
Indeed, assume the degree of an internal white vertex is $i$, its number of incident loops is $j$ and this white vertex has exactly $k$ descending non-seed trees. This vertex has $i-1$ ordered children. Among them $i-1-2j$ can be either a thorn or an edge also incident to an internal black vertex. The remaining $2j$ are the extremities of loops that can be paired in $(2j-1)!!$ different ways. Then $j$ loops and $k$ incident non seed trees necessarily implies $j-k$ incident labeled arrows. The factors $1/k!$ and $1/(j-k)!$ are needed as descending non seed trees and incoming labeled arrows are not ordered.  This formula simplifies using $(2j-1)! = 2^{-j}(2j)!/j!$ and $\sum_{0\leq k \leq j}B'^kA_w^{j-k}/k!(j-k)! = (B'+A_w)^j/j!$. One gets:
\begin{equation*}
W = x\sum_{i\geq 1,j \geq 0}t_{i,j}\binom{i-1}{j,j}\left (\frac{B'+A_w}{2}\right)^j(1+B)^{i-1-2j}.
\end{equation*}
We define function $\Phi_W$ as:
\begin{equation*}
W = x\,\Phi_W(H,W,B,W',B',A_w,A_b).
\end{equation*}
Similarly,
\begin{align*}
B &= y\sum_{i\geq 1,j \geq 0}u_{i,j}\binom{i-1}{j,j}\left (\frac{W'+A_b}{2}\right)^j(1+W)^{i-1-2j},\\
B &= y\,\Phi_B(H,W,B,W',B',A_w,A_b). 
\end{align*}
Also,
\begin{equation*}
W'= x'\sum_{i\geq 1,j,k\geq 0}t'_{i,j}\binom{i-1}{2j-1}(1+B)^{i-2j}(2j-1)!!\frac{B'^k}{k!}\frac{A_w^{j-k}}{(j-k)!}.
\end{equation*}
Then
\begin{align*}
W' &= 2x'\sum_{i\geq 1,j \geq 0}t'_{i,j}\binom{i-1}{j,j-1}\left (\frac{B'+A_w}{2}\right)^j(1+B)^{i-2j},\\
W &= x'\,\Phi_{W'}(H,W,B,W',B',A_w,A_b).
\end{align*}
Similar equation applies to $B'$:
\begin{align*}
B' &= 2y'\sum_{i\geq 1,j \geq 0}u'_{i,j}\binom{i-1}{j,j-1}\left (\frac{W'+A_b}{2}\right)^j(1+W)^{i-2j},\\
B' &= y'\,\Phi_{B'}(H,W,B,W',B',A_w,A_b).
\end{align*}
Generating function $H$ verifies the following equation:
\begin{align*}
H &= x_0\binom{i_0}{j_0,j_0}\left (\frac{B'+A_w}{2}\right)^{j_0}(1+B)^{{i_0}-2j_0},\\
H &= x_0\,\Phi_{H}(H,W,W',B,B',A_w,A_b).
\end{align*}
Define $\Phi_{A_w} = \Phi_{A_b} = 1$ and ${\bf \Phi} = (\Phi_H,\Phi_W,\Phi_{W'},\Phi_B,\Phi_{B'},\Phi_{A_w},\Phi_{A_b}) =(\Phi_1,\Phi_2,\ldots,\Phi_7)$ and ${\bf x} = (x_0,x,x',y,y',f_1,f_2)$. We have
\begin{equation}
(H,W,B,W',B',A_w,A_b) = {\bf x}\,{\bf \Phi}(H,W,B,W',B',A_w,A_b)
\end{equation}
\noindent Using the multivariate Lagrange inversion formula for monomials (see \cite[1.2.9]{GJCE}) we find:
\begin{eqnarray}
\nonumber {k_1 k_2 k_3 k_4 k_5 k_6 k_7}\,[{\bf x}^{\bf k}]\,\,H =\hspace{0mm}
\label{eq lagrange}\sum_{\{\mu_{ij}\}}\mid\mid \delta_{ij}k_j-\mu_{ij}\mid\mid\prod_{1\leq i \leq 7}[H^{\mu_{i1}}W^{\mu_{i2}}W'^{\mu_{i3}}B^{\mu_{i4}}B'^{\mu_{i5}}A_w^{\mu_{i6}}A_b^{\mu_{i7}}]{ \Phi}_i^{k_i},
\end{eqnarray}
\noindent where $\mid\mid \cdot \mid\mid$ denotes the determinant, ${\bf k} = (k_1,k_2,k_3,k_4,k_5,k_6,k_7)$, $\delta_{ij}$ is the Kronecker delta function and the sum is over all $7\times 7$ integer matrices $\{\mu_{ij}\}$ such that:

\begin{minipage}{1\linewidth}\centering
\begin{itemize}
\item $\mu_{ij} = 0$ for $i\geq 6$ or $j=1$ or $i=j$
\item $\mu_{12} = \mu_{13} = \mu_{17} =  \mu_{23} = \mu_{27}=\mu_{32}=\mu_{37}=\mu_{45} =\mu_{46} = \mu_{54} =\mu_{56}= 0$
\item $\mu_{42} + \mu_{52} = k_2 $
\item $\mu_{43} + \mu_{53} = k_3$
\item $\mu_{14} + \mu_{24}+\mu_{34} = k_4$
\item $\mu_{15} + \mu_{25}+\mu_{35} = k_5$
\item $\mu_{16} + \mu_{26}+\mu_{36} = k_6$
\item $\mu_{47} + \mu_{57} = k_7$
\end{itemize}
\end{minipage}

We look at the solution when ${\bf k} = (1,p,p',q,q',r-q',r-p')$. Namely when $\mu$ is of the form\\

\begin{minipage}{1\linewidth}
\centering
$\mu=
\left[
\begin{array}{ccccccc}
0 & 0 & 0 & a & b & g&0\\
0 & 0 & 0 & c & d & h&0\\
0 & 0 & 0 & q-a-c & q'-b-d & r-q'-g-h&0\\
0 & e & f & 0 & 0 & 0&i\\
0 & p-e & p'-f & 0 & 0 & 0&r-p'-i\\
0 & 0 & 0 & 0 & 0 & 0&0\\
0 & 0 & 0 & 0 & 0 & 0&0
\end{array}
\right],
$
\end{minipage}
\noindent where the parameters $a,b,c,d,e,f,g,h,i$ are non negative integers.
In this case the determinant $\Delta = \mid\mid \delta_{ij}k_j-\mu_{ij}\mid\mid$ reads:\\

\begin{minipage}{1\linewidth}
\centering
$\Delta=
\left |
\begin{array}{ccccccc}
1 & 0 & 0 & -a & -b & -g&0\\
0 & p & 0 & -c & -d & -h&0\\
0 & 0 & p' & a+c-q & b+d-q' & g+h+q'-r&0\\
0 & -e & -f & q & 0 & 0&-i\\
0 & e-p & f-p' & 0 & q' & 0&i+p'-r\\
0 & 0 & 0 & 0 & 0 & r-q'&0\\
0 & 0 & 0 & 0 & 0 & 0&r-p'
\end{array}
\right |,
$
\end{minipage}
\noindent that immediately reduces as\\

\begin{minipage}{1\linewidth}
\centering
$\Delta=(r-p')(r-q')
\left |
\begin{array}{cccc}
 p & 0 & -c & -d\\
 0 & p' & a+c-q & b+d-q'\\
 -e & -f & q & 0\\
 e-p & f-p' & 0 & q'
\end{array}
\right |. 
$
\end{minipage}
Define $\Delta'=\Delta/(r-p')(r-q')$. Looking at the dependence in $\bf t,t',u,u'$ we have\\
\begin{align}
\label{eq: G}\nonumber G({\bf A}) =
&\frac{{p'!q'!(r-p')!(r-q')!}}{pp'qq'}\sum\Delta' [B^{q-a-c}B'^{q'-b-d}A_w^{r-q'-g-h}{\bf t'}^{P'}]\Phi_{W'}^{p'}\\
&\times[B^{c}B'^{d}A_w^{h}{\bf t}^P]\Phi_W^p[W^{e}W'^{f}A_b^{i}{\bf u}^Q]\Phi_B^q[W^{p-e}{W'}^{p'-f}A_b^{r-p'-i}{\bf u'}^{Q'}]\Phi_{B'}^{q'}[B^{a}B'^{b}A_w^{g}]\Phi_H
\end{align}

\noindent The sum runs over the parameters $a,b,c,d,e,f,g,h,i$. Next step is to compute
\begin{equation*}
\Phi_W^p = \sum_{{\bf \eta} \in co(p)}\binom{p}{{\bf \eta} }\prod_{i,j}\left [t_{i,j}\binom{i-1}{j,j}\left (\frac{B'+A_w}{2}\right)^j(1+B)^{i-1-2j}\right ]^{\eta_{i,j}},
\end{equation*}
where $co(p)$ denotes the sets of the two dimensional compositions $\bf \eta$ of $p$  i.e the two dimensional arrays such that $\sum_{i,j} \eta_{i,j} = p$. We have
\begin{equation*}
\Phi_W^p = \sum_{{\bf \eta} \in co(p)}\binom{p}{{\bf \eta} }\left (\frac{B'+A_w}{2}\right)^{\sum_{i,j}j\eta_{i,j}}(1+B)^{\sum_{i,j}(i-1-2j)\eta_{i,j}}\prod_{i,j}\left [ \binom{i-1}{j,j}t_{i,j}\right ]^{\eta_{i,j}}.
\end{equation*}
Then:
\begin{equation*}
[B^{c}B'^{d}A_w^{h}{\bf t}^P]\Phi_W^p = \frac{p!}{P!}\left (\frac{1}{2}\right)^{\sum_{i,j}jP_{i,j}}\binom{\sum_{i,j}jP_{i,j}}{d}\binom{\sum_{i,j}(i-1-2j)P_{i,j}}{c}\prod_{i,j}\binom{i-1}{j,j}^{P_{i,j}}.
\end{equation*}
\noindent The equation above is true for $h =  \sum_{i,j}jP_{i,j}-d$. Otherwise $[B^{c}B'^{d}A_w^{h}{\bf t}^P]\Phi_W^p =0$ and other values of $h$ lead to a zero contribution to the global sum. In a similar fashion, one finds
\begin{align*}
&[W^{e}W'^{f}A_b^{i}{\bf u}^Q]\Phi_B^q =\\
&\hspace{3cm} \frac{q!}{Q!}\left (\frac{1}{2}\right)^{\sum_{i,j}jQ_{i,j}}\binom{\sum_{i,j}jQ_{i,j}}{f}\binom{\sum_{i,j}(i-1-2j)Q_{i,j}}{e}\prod_{i,j}\binom{i-1}{j,j}^{Q_{i,j}},\\
&[B^{q-a-c}B'^{q'-b-d}A_w^{r-q'-g-h}{\bf t'}^{P'}]\Phi_{W'}^{p'} =\\
&\hspace{3cm}\frac{p'!}{P'!}\left (\frac{1}{2}\right)^{\sum_{i,j}jP'_{i,j}-p'}\binom{\sum_{i,j}jP'_{i,j}}{q'-b-d}\binom{\sum_{i,j}(i-2j)P'_{i,j}}{q-a-c}\prod_{i,j}\binom{i-1}{j,j-1}^{P'_{i,j}},\\
&[W^{p-e}{W'}^{p'-f}A_b^{r-p'-i}{\bf u'}^{Q'}]\Phi_{B'}^{q'} =\\
&\hspace{3cm}\frac{q'!}{Q'!}\left (\frac{1}{2}\right)^{\sum_{i,j}jQ'_{i,j}-q'}\binom{\sum_{i,j}jQ'_{i,j}}{p'-f}\binom{\sum_{i,j}(i-2j)Q'_{i,j}}{p-e}\prod_{i,j}\binom{i-1}{j,j-1}^{Q'_{i,j}},\\
&[B^{a}B'^{b}A_w^{g}]\Phi_H = \left (\frac{1}{2}\right)^{j_0}\binom{j_0}{b}\binom{i_0-2j_0}{a}\binom{i_0}{j_0,j_0}.
\end{align*}
\begin{remark}One can check that there is exactly one set of parameters $g,h,i$ that leads to a non zero contribution as $(r-q'-g-h) + g + h = \sum_{i,j}j(P_{i,j}+P'_{i,j}) +j_0 - d -b -(q'-b-d)$ and $(r-p'-i) + i = \sum_{i,j}j(Q_{i,j}+Q'_{i,j})  - f -(p'-f)$.
\end{remark} 
Substituting these expressions in Equation \ref{eq: G}  and summing over the parameters $a,b,c,d,e,f$ leads to the explicit formulation of $G(A)$.\\
The computation can be performed as follows:
\begin{itemize}
\item First as $\sum_{i,j}jQ_{i,j}+\sum_{i,j}jQ'_{i,j} = \sum_{i,j}jP_{i,j}+\sum_{i,j}jP'_{i,j}+j_0 = r$, we have\\
\begin{align*}
G({\bf A}) =&\frac{{p'!^2q'!^2p!q!(r-p')!(r-q')!}}{pp'qq'2^{2r-p'-q'}{\bf A}!}\binom{i_0}{j_0,j_0}\sum_{a,b,c,d,e,f}\Delta'\binom{i_0-2j_0}{a}\binom{\sum_{i,j}(i-2j)P'_{i,j}}{q-a-c}\binom{j_0}{b}\\
&\times\binom{\sum_{i,j}jP_{i,j}}{d}\binom{\sum_{i,j}(i-1-2j)P_{i,j}}{c}\binom{\sum_{i,j}jQ_{i,j}}{f}\binom{\sum_{i,j}(i-1-2j)Q_{i,j}}{e}\\
&\times\binom{\sum_{i,j}jP'_{i,j}}{q'-b-d}\binom{\sum_{i,j}jQ'_{i,j}}{p'-f}\binom{\sum_{i,j}(i-2j)Q'_{i,j}}{p-e}\prod_{i,j}{\binom{i-1}{j,j}}^{(P+Q)_{i,j}}{\binom{i-1}{j,j-1}}^{(P'+Q')_{i,j}}.
\end{align*}
\item Then we sum over $a$ by rewriting the determinant $\Delta'$ as
\begin{align*} \Delta' &= \left |
\begin{array}{cccc}
 0 & 0 & a & b\\
 p & 0 & -c & -d\\
 -e & -f & q & 0\\
 e-p & f-p' & 0 & q'
\end{array}
\right |
= a   \underbrace{\left |
\begin{array}{ccc}
 p & 0  & -d\\
 -e & -f & 0\\
 e-p & f-p' & q'
\end{array}
\right |}_\text{$\Delta''$} 
-b\underbrace{\left |
\begin{array}{ccc}
 p & 0  & -c\\
 -e & -f & q\\
 e-p & f-p' & 0
\end{array}
\right |}_\text{$\Delta'''$}.
\end{align*}
\noindent Thanks to Vandermonde's convolution 
\begin{align*}
\sum_{a}b\Delta'''\binom{i_0-2j_0}{a}\binom{\sum_{i,j}(i-2j)P'_{i,j}}{q-a-c} &= b\Delta'''\binom{i_0-2j_0+\sum_{i,j}(i-2j)P'_{i,j}}{q-c},\\
\sum_{a}a\Delta''\binom{i_0-2j_0}{a}\binom{\sum_{i,j}(i-2j)P'_{i,j}}{q-a-c}&= \sum_{a}\Delta''(i_0-2j_0)\binom{i_0-2j_0-1}{a-1}\binom{\sum_{i,j}(i-2j)P'_{i,j}}{q-a-c},\\
 &= \Delta''(i_0-2j_0)\binom{i_0-2j_0+\sum_{i,j}(i-2j)P'_{i,j}-1}{q-c-1}.
\end{align*}\\
\noindent We have
\begin{align*}
&G({\bf A}) =\frac{{p'!^2q'!^2p!q!(r-p')!(r-q')!}}{pp'qq'2^{2r-p'-q'}{\bf A}!}\binom{i_0}{j_0,j_0}\prod_{i,j}{\binom{i-1}{j,j}}^{(P+Q)_{i,j}}{\binom{i-1}{j,j-1}}^{(P'+Q')_{i,j}} \\
&\times\sum_{b,c,d,e,f}\left[\Delta''(i_0-2j_0)\binom{i_0-2j_0+\sum_{i,j}(i-2j)P'_{i,j}-1}{q-c-1}-b\Delta'''\binom{i_0-2j_0+\sum_{i,j}(i-2j)P'_{i,j}}{q-c}\right]\\
&\times\binom{j_0}{b}\binom{\sum_{i,j}jP_{i,j}}{d}\binom{\sum_{i,j}(i-1-2j)P_{i,j}}{c}\binom{\sum_{i,j}jQ_{i,j}}{f}\binom{\sum_{i,j}(i-1-2j)Q_{i,j}}{e}\\
&\times\binom{\sum_{i,j}jP'_{i,j}}{q'-b-d}\binom{\sum_{i,j}jQ'_{i,j}}{p'-f}\binom{\sum_{i,j}(i-2j)Q'_{i,j}}{p-e}.
\end{align*}
\item We proceed with the summation over $b$:
\begin{align*}
&G({\bf A}) =\frac{{p'!^2q'!^2p!q!(r-p')!(r-q')!}}{pp'qq'2^{2r-p'-q'}{\bf A}!}\binom{i_0}{j_0,j_0}\prod_{i,j}{\binom{i-1}{j,j}}^{(P+Q)_{i,j}}{\binom{i-1}{j,j-1}}^{(P'+Q')_{i,j}} \\
&\times\sum_{c,d,e,f}\left[\Delta''(i_0-2j_0)\binom{i_0-2j_0+\sum_{i,j}(i-2j)P'_{i,j}-1}{q-c-1}\binom{\sum_{i,j}jP'_{i,j}+j_0}{q'-d}-\right.\\
&\hspace{5cm}\left. j_0\Delta'''\binom{i_0-2j_0+\sum_{i,j}(i-2j)P'_{i,j}}{q-c}\binom{\sum_{i,j}jP'_{i,j}+j_0-1}{q'-d-1}\right]\\
&\times\binom{\sum_{i,j}jP_{i,j}}{d}\binom{\sum_{i,j}(i-1-2j)P_{i,j}}{c}\binom{\sum_{i,j}jQ_{i,j}}{f}\binom{\sum_{i,j}(i-1-2j)Q_{i,j}}{e}\\
&\times\binom{\sum_{i,j}jQ'_{i,j}}{p'-f}\binom{\sum_{i,j}(i-2j)Q'_{i,j}}{p-e}.
\end{align*}
\item Then notice that $\Delta''' = -c\Delta''''+pq(p'-f)$ with $\Delta'''' = \left |
\begin{array}{cc}
 -e & -f \\
-p & -p'
\end{array}
\right |$ and $i_0-2j_0+\sum_{i,j}(i-2j)P'_{i,j}+\sum_{i,j}(i-1-2j)P_{i,j} = n-2r-p$. Summing over $c$ gives:
\begin{align*}
&G({\bf A}) =\frac{{p'!^2q'!^2p!q!(r-p')!(r-q')!}}{pp'qq'2^{2r-p'-q'}{\bf A}!}\binom{i_0}{j_0,j_0}\prod_{i,j}{\binom{i-1}{j,j}}^{(P+Q)_{i,j}}{\binom{i-1}{j,j-1}}^{(P'+Q')_{i,j}} \\
&\times\sum_{d,e,f}\left[\Delta''(i_0-2j_0)\binom{n-2r-p-1}{q-1}\binom{\sum_{i,j}jP'_{i,j}+j_0}{q'-d}-\right.\\
&\hspace{3cm}\left. pq(p'-f)j_0\binom{n-2r-p}{q}\binom{\sum_{i,j}jP'_{i,j}+j_0-1}{q'-d-1}-\right.\\
&\hspace{3cm}\left.\Delta''''j_0\sum_{i,j}{(i-1-2j)P_{i,j}}\binom{n-2r-p-1}{q-1}\binom{\sum_{i,j}jP'_{i,j}+j_0-1}{q'-d-1}\right]\\
&\times\binom{\sum_{i,j}jP_{i,j}}{d}\binom{\sum_{i,j}jQ_{i,j}}{f}\binom{\sum_{i,j}(i-1-2j)Q_{i,j}}{e}\binom{\sum_{i,j}jQ'_{i,j}}{p'-f}\binom{\sum_{i,j}(i-2j)Q'_{i,j}}{p-e}.
\end{align*}
\item As $\Delta'' = -d\Delta''''-fq'p$, summing over $d$ yields
\begin{align*}
&G({\bf A}) =\frac{{p'!^2q'!^2p!q!(r-p')!(r-q')!}}{pp'qq'2^{2r-p'-q'}{\bf A}!}\binom{i_0}{j_0,j_0}\prod_{i,j}{\binom{i-1}{j,j}}^{(P+Q)_{i,j}}{\binom{i-1}{j,j-1}}^{(P'+Q')_{i,j}}\\
&\times\sum_{e,f}-\left[\Delta''''\left(\sum_{i,j}(i_0j-j_0(i-1))P_{i,j}\right)+frp(i_0-2j_0)+(p'-f)p(n-2r-p)\right]\\
&\times\binom{r-1}{q'-1}\binom{n-2r-p-1}{q-1}\binom{\sum_{i,j}jQ_{i,j}}{f}\binom{\sum_{i,j}(i-1-2j)Q_{i,j}}{e}\\
&\times\binom{\sum_{i,j}jQ'_{i,j}}{p'-f}\binom{\sum_{i,j}(i-2j)Q'_{i,j}}{p-e}.
\end{align*}
\item One proceeds in a similar fashion to sum over $e$ and $f$:
\begin{align*}
G({\bf A}) =&\frac{{p'!^2q'!^2p!q!(r-p')!(r-q')!}}{pp'qq'2^{2r-p'-q'}{\bf A}!}\prod_{i,j}{\binom{i-1}{j,j}}^{(P+Q)_{i,j}}{\binom{i-1}{j,j-1}}^{(P'+Q')_{i,j}} \\
&\times pr^2\mathcal{I}({\bf A})\binom{r-1}{q'-1}\binom{r-1}{p'-1}\binom{n-2r-p-1}{q-1}\binom{n-2r-q}{p}.
\end{align*}
\noindent Finally,
\begin{align*}
G({\bf A}) =\frac{p'!q'!r!^2(n-1-2r-p)!(n-2r-q)!\mathcal{I}({\bf A})}{2^{2r-p'-q'}(n-p-q-2r)!^2{\bf A}!}\prod_{i,j}{\binom{i-1}{j,j}}^{(P+Q)_{i,j}}{\binom{i-1}{j,j-1}}^{(P'+Q')_{i,j}}.
\end{align*}
\end{itemize}
By replacing $G({\bf A})$ with the expression above in Lemma \ref{lem: f2g}, one gets the desired result:
\begin{equation*}
F({\bf A}) =\frac{\mathcal{I}({\bf A})}{{\bf A}!}\frac{r!^2(n-1-2r-p)!(n-2r-q)!}{2^{2r-p'-q'}(n-p-q-2r)!}\prod_{i,j}{\binom{i-1}{j,j}}^{(P+Q)_{i,j}}{\binom{i-1}{j,j-1}}^{(P'+Q')_{i,j}}.
\end{equation*}
\subsection{Demonstration of Equation \ref{eq:nodegrees} (sketch)}
The computation of $F_{p,p',q,q',r}$ can be performed in a similar fashion. We define the number $\widetilde{G}_{p,p',q,q',r}$ of modified forests as in Section \ref{subsec: andeg} with one major difference in point (i) that we replace by (i'):
\begin{itemize}
\item[(i')] {\bf There is no thorns} connected to the vertices.
\end{itemize}
As an immediate result the two quantities are linked through the relation

$$F_{p,p',q,q',r} = \binom{n}{n+1-p-q-2r}\binom{n-1}{n+1-p-q-2r}\frac{(n+1-p-q-2r)!}{p'!q'!}\widetilde{G}_{p,p',q,q',r},$$

\noindent where the first binomial coefficient counts the number of way of positioning the thorns around the white vertices and the second one, around the black vertices. 

 \noindent We need to compute the generating function 
\begin{equation}
H = \sum_{p,p',q,q',r} \widetilde{G}_{p,p',q,q',r} x^{p}y^{q}\frac{x'^{p'}}{p'!}\frac{y'^{q'}}{q'!}\frac{{f_1}^{r-q'}}{(r-q')!}\frac{{f_2}^{r-p'}}{(r-p')!}
\end{equation}
Define the generating functions $W$ (for internal white vertices {\bf and} the root of the seed tree), $W'$, $B$, $B'$, $A_b$ and $A_w$ (consistent definition with respect to the previous subsection). In this case $W = H$. The new relations between the generating functions read

\begin{equation}
W = x\sum_{i,j,k\geq 0}\binom{i}{2j}B^{i-2j}(2j-1)!!\frac{B'^k}{k!}\frac{A_w^{j-k}}{(j-k)!},
\end{equation}
that simplifies as $\sum_{i\geq 0}\binom{i}{2j}B^{i-2j} = (1-B)^{-2j-1}$:
\begin{equation}
W = \frac{x}{1-B}\sum_{j\geq 0}\binom{2j}{j}\left (\frac{B'+A_w}{2(1-B)^2}\right)^j.
\end{equation}
Finally,
\begin{equation}
W = \frac{x}{1-B}\frac{1}{\sqrt{1-4\left (\frac{B'+A_w}{2(1-B)^2}\right)}}.
\end{equation}
Further we get
\begin{equation}
B = \frac{y}{1-W}\frac{1}{\sqrt{1-4\left (\frac{W'+A_b}{2(1-W)^2}\right)}},
\end{equation}
\begin{equation}
W' = x'\frac{1-\sqrt{1-4\left (\frac{B'+A_w}{2(1-B)^2}\right)}}{\sqrt{1-4\left (\frac{B'+A_w}{2(1-B)^2}\right)}},
\end{equation}
\begin{equation}
B' = y'\frac{1-\sqrt{1-4\left (\frac{W'+A_b}{2(1-W)^2}\right)}}{\sqrt{1-4\left (\frac{W'+A_b}{2(1-W)^2}\right)}}.
\end{equation}

Applying the Lagrange formula for implicit functions as in the previous case leads to the desired formula.
\bibliographystyle{abbrvnat}

\end{document}